\documentclass[12pt]{article}
\usepackage{amsmath,amssymb,amsfonts,latexsym,graphicx}

\catcode `\@=11 \@addtoreset{equation}{section}

\catcode `\@=12
 \voffset1cm
  \newcommand{\cP}{{\mathcal P}}
\newcommand{\cT}{{\mathcal T}} \newcommand{\cC}{{\mathcal C}}

\newcommand{\cH}{\mathcal{H}}

\newenvironment{proof}{\noindent {\bf Proof --}}{\hfill$\square$ \vspace{3mm}\endtrivlist}
\newtheorem{ttt}{\bfseries{Theorem}}[section]
\newtheorem{ddd}{\bfseries{Definition}}
\newtheorem{ppp}{\bfseries{Proposition}}[section]
\newtheorem{lelele}{\bfseries{Lemma}}[section]
\newtheorem{ccc}{\bfseries{Corollary}}[section]
\newtheorem{rem}{\bfseries{Remark}}[section]
\numberwithin{equation}{section}

\catcode `\@=11 \@addtoreset{equation}{section}
\catcode `\@=12
\textwidth17cm \textheight21cm

\hoffset-1.5cm \voffset-1cm

\begin{document}

\thispagestyle{empty}

\vspace*{2cm}

\begin{center}
{\Large \bf Towards Generalized Riesz Systems Theory}   \vspace{2cm}\\

{\large A. Kamuda}\\
AGH University of Science and Technology,  \\ 30-059 al. Mickiewicza 30, Krak\'{o}w, Poland\\
e-mail: kamuda@agh.edu.pl

\vspace{2mm}

{\large S. Ku\.{z}el}\\
AGH University of Science and Technology,  \\ 30-059 al. Mickiewicza 30, Krak\'{o}w, Poland\\
e-mail: kuzhel@agh.edu.pl
\end{center}


\begin{abstract}
\noindent Pseudo-Hermitian Hamiltonians  have recently become a field of wide investigation. Originally, the Generalized Riesz Systems (GRS) have been introduced as an auxiliary tool in this theory. In contrast, the current paper,  GRSs are analysed  in terms  of basis theory. The relationship between semi-regular sequences and GRSs is provided.   Various characterizations of GRSs are discussed.
\end{abstract}


{\bf Keywords:--} Riesz basis; biorthogonal sequences;  Krein space;  PT-symmetric quantum mechanics; 

{\bf MSC classification:--} 46N50; 81Q12

\vfill

\newpage

\section{Introduction}\label{sec1}
Theory of non self-adjoint operators  attracts a steady interests in various fields of mathematics and physics,
see, e.g., \cite{BGSZ} and the reference therein.  This interest grew considerably due to the recent progress in
theoretical physics of  $\mathcal{P}\mathcal{T}$-symmetric (pseudo-Hermitian) Hamiltonians \cite{Bender, BenderSI, Most}.
Studies of pseudo-Hermitian operators carried out in \cite{Krejcirik, Mit, Siegl} show that, even if the eigenvalues of a Hamiltonian are real,
the Riesz basis property of its eigenstates is, in many cases, lost. 

Such kind of phenomenon is typical for  $\mathcal{PT}$-symmetric Hamiltonians and it gives rise to a natural problem:
\emph{How to define a suitable generalization of the Riesz bases concept which can be useful in theory of pseudo-Hermitian 
Hamiltonians?}

One of generalizations was proposed by Davies \cite{Davies}: the concept of tame and wild sequences.  
Each basis  is a tame sequence.  The tameness of eigenstates of non self-adjoint operators $H$ with a purely discrete real spectrum 
allows one to discover additional properties of $H$. In 
particular, a polynomially bounded behavior of the corresponding resolvent was established in \cite[Theorem 3]{Davies}.
However, in major part, eigenstates of pseudo-Hermitian Hamiltonians form wild systems that are much more complicated for the investigation
\cite{Davies, Krejcirik, Mit}.

Another approach to the generalization of Riesz bases is based on the rigged Hilbert spaces framework instead of the original Hilbert space
\cite{Trapani}.

In the present paper, we  study generalized Riesz systems (GRS) which was originally introduced in \cite{Inoue3, Inoue} and then,
slightly modified in \cite{BIT, Kuzhel}. In order to explain the idea of definition we note that vectors of a Riesz basis $\{\phi_n\}$  
have the form $\phi_n=Re_n$, where
$R$ is  a bounded and boundedly invertible operator in a Hilbert space $\cH$ and $\{e_n\}$ is an orthonormal basis (ONB) of $\cH$.
Using the polar decomposition of $R=|R^*|U=e^{Q/2}U$, where $U$ is a unitary operator in $\cH$, we arrive at the conclusion:
 \emph{a sequence $\{\phi_n\}$ is called a Riesz basis if there exists a bounded self-adjoint operator $Q$ in $\cH$ and 
an ONB $\{e_n\}$  such that  $\phi_n=e^{Q/2}e_n$.}
This simple observation leads to: 

\begin{ddd}\label{d1}
A sequence $\{\phi_n\}$ is called a generalized Riesz system (GRS) 
if there exists a self-adjoint operator $Q$ in $\cH$ and an ONB $\{e_n\}$  such that  $e_n\in\mathcal{D}(e^{Q/2})\cap\mathcal{D}(e^{-Q/2})$ and 
$\phi_n=e^{Q/2}e_n.$	
\end{ddd}

For a GRS  $\{\phi_n\}$, the dual  GRS is determined by the formula $\{\psi_n=e^{-Q/2}e_n\}$.
Obviously, $\{\phi_n\}$ and  $\{\psi_n\}$ are bi-orthogonal sequences.

Dual GRS's can be considered as a particular case of 
$\mathcal{G}$-quasi bases introduced by Bagarello in \cite{bag2013JMP},  
and then analyzed in a series of papers, see, \cite{BGSZ} and the references therein. 

The main objective of the paper is  to further development of the GRS theory. 
In contrast to the standard approach \cite{BB, BIT,  Inoue1, Inoue3,  Inoue4, Inoue}, where GRS were mainly used as auxiliary tools  for the definition and investigation of
manifestly non self-adjoint Hamiltonians  and relevant physical operators, we consider GRS as a self-contained object of the basis theory \cite{Christ, Heil}.
Our studies are based on advanced methods of extension theory of symmetric operators, see \cite{AK_Arlin, AT} and Subsection \ref{sec2.1}.

We say that a sequence of vectors $\{\phi_n\}$ of a Hilbert space $\cH$ is 
\emph{semi-regular} if  $\{\phi_n\}$ is minimal and complete in $\cH$. 
The minimality of $\{\phi_n\}$ yields the existence of a bi-orthogonal sequence $\{\psi_n\}$, while 
the completeness of $\{\phi_n\}$  guarantees the uniqueness of  $\{\psi_n\}$.  
The positive symmetric operator $S$ mapping $\{\phi_n\}$ onto $\{\psi_n\}$, see \eqref{new14} plays an important role in our studies.
We show that a semi-regular sequence $\{\phi_n\}$  is a GRS if and only if the Friedrichs extension $A_F$ of $S$ is a positive 
operator (Theorem \ref{new31}).   Another criterion of being GRS was established in \cite[Theorem 3.4]{Inoue3}
by methods based on the investigation of special operators associated with $\{\phi_n\}$ and $\{e_n\}$.


Theorem \ref{new31} allows one to explain the phenomenon of nonuniqueness of self-adjoint operators $Q$
in the definition of GRS (Subsection \ref{subsec2.3}). Further we show that each semi-regular sequence $\{\phi_n\}$
with the property of being Bessel sequence  has to be a GRS and we characterize this important case in terms of $Q$ 
(Theorem \ref{KKK4}). The  Olevskii's result \cite[Theorem 1]{Olew} allows one to establish a relationship between  essential
spectra of  self-adjoint operators $Q$ and conditional bounded bases (Proposition \ref{ppp21}). 
 
At the end of Section \ref{sec2}  we concentrate on an important particular case 
(which fits well the specific of $\mathcal{PT}$-symmetric Hamiltonians) where a semi-regular sequence $\{\phi_n\}$ is
 $J$-orthonormal and we combine the GRS-related approach with Krein spaces based methods \cite[Chap. 6 ]{BGSZ}, \cite{KKS}.
 Following \cite{Kuzhel}, we define $J$-orthonormal sequences of the first/second type and discuss advantages of 
 first type sequences.  In particular, eigenstates of the shifted harmonic oscillator form a $J$-orthonormal sequence of the first
 type and it seems natural to suppose that eigenstates of a $\mathcal{PT}$-symmetric Hamiltonian with 
 unbroken $\mathcal{PT}$-symmetry \cite[p. 41]{Bender} form a first type sequence.   In Subsection \ref{sec3.2} 
 we present a general method which allows one to construct of the first/second type sequences.
 
Throughout the paper, $\mathcal{D}(A)$, $\mathcal{R}(A)$, and 
$\ker{A}$ denote the domain, the range, and the null-space of a linear operator
$A$, respectively. The symbol $\{v_n\}$ means the collection of vectors $v_n$
parametrized by a set $\mathcal{I}$ of integers. Usually, $\mathcal{I}=\mathbb{N}$.   

\section{General theory of GRS}\label{sec2}

\subsection{Preliminaries}\label{sec2.1}
Here all necessary results of extension theory of symmetric operators are presented in a form convenient for our exposition.
The  articles \cite{Ando, AK_Arlin, AT} and \cite[Chap. 10]{Konrad} are recommended as complementary reading on the subject.

Let $\mathcal{H}$ be a complex Hilbert space with inner product  $(\cdot, \cdot)$ linear in the first argument.
 An operator $A$ is called  positive [nonnegative] if   $(Af,f)>0$  \  [$(Af,f)\geq{0}$] for non-zero $f\in\mathcal{D}(A)$. 

Let  $A$ and $B$ be nonnegative self-adjoint operators. We say that $A$ \emph{is greater or equal} $B$, i.e., $A\geq{B}$
if
\begin{equation}\label{AAA1}
 \mathcal{D}(A^{1/2})\subseteq\mathcal{D}(B^{1/2})  \quad \mbox{and} \quad \|A^{1/2}f\|\geq\|B^{1/2}f\|, \quad  f\in\mathcal{D}(A^{1/2}).
 \end{equation}

The next technical result follows  from \eqref{AAA1} (see \cite[Corollary 10.12]{Konrad}). 
 \begin{lelele}\label{AAA2}
 If $A\geq{B}$ and $B$ is positive, then $A$ is also positive.  
\end{lelele}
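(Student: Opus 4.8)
The claim is: if $A \geq B$ (in the sense of \eqref{AAA1}) and $B$ is positive, then $A$ is positive. Since both operators are nonnegative self-adjoint by hypothesis, we already know $(Af,f) \geq 0$ for all $f \in \mathcal{D}(A)$; what must be ruled out is the possibility that $(Af_0, f_0) = 0$ for some nonzero $f_0 \in \mathcal{D}(A)$. So the plan is a short proof by contradiction.

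First I would recall the standard identification $(Af, f) = \|A^{1/2}f\|^2$ for $f \in \mathcal{D}(A) \subseteq \mathcal{D}(A^{1/2})$, which holds for any nonnegative self-adjoint operator via the spectral calculus (or via the fact that $A = (A^{1/2})^* A^{1/2}$). Likewise $(Bg, g) = \|B^{1/2}g\|^2$ on $\mathcal{D}(B^{1/2})$. Then suppose, for contradiction, that there is a nonzero $f_0 \in \mathcal{D}(A)$ with $(Af_0, f_0) = 0$, i.e. $\|A^{1/2}f_0\| = 0$. Because $A \geq B$, the defining inequality \eqref{AAA1} gives $f_0 \in \mathcal{D}(A^{1/2}) \subseteq \mathcal{D}(B^{1/2})$ and
\[
0 = \|A^{1/2}f_0\| \geq \|B^{1/2}f_0\| \geq 0,
\]
so $\|B^{1/2}f_0\| = 0$, hence $(Bf_0, f_0) = \|B^{1/2}f_0\|^2 = 0$.

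Now I would like to conclude that this contradicts positivity of $B$ — but there is a subtlety, which is the one genuine obstacle: positivity of $B$ as defined in the paper means $(Bg,g) > 0$ for nonzero $g \in \mathcal{D}(B)$, and a priori $f_0$ lies only in $\mathcal{D}(B^{1/2})$, not necessarily in $\mathcal{D}(B)$. So the step to handle carefully is the passage from ``$\|B^{1/2}f_0\| = 0$'' to a contradiction with positivity of $B$. The clean way is to observe that $\|B^{1/2}f_0\| = 0$ means $f_0 \in \ker B^{1/2}$. Since $B$ is nonnegative self-adjoint, $\ker B^{1/2} = \ker B$ (this is a standard fact: $B^{1/2}f_0 = 0 \Rightarrow Bf_0 = B^{1/2}(B^{1/2}f_0) = 0$, and conversely $f_0 \in \mathcal{D}(B)$ with $Bf_0 = 0$ gives $\|B^{1/2}f_0\|^2 = (Bf_0,f_0) = 0$). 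Hence $f_0 \in \mathcal{D}(B)$ and $Bf_0 = 0$, so $(Bf_0, f_0) = 0$ with $f_0 \neq 0$, contradicting the positivity of $B$. Therefore no such $f_0$ exists and $A$ is positive. (Alternatively, and even more briefly, one can cite that positivity of a nonnegative self-adjoint operator $B$ is equivalent to $\ker B^{1/2} = \{0\}$, i.e. to the injectivity of $B^{1/2}$, and then the displayed inequality immediately forces $A^{1/2}$ to be injective on $\mathcal{D}(A^{1/2})$, which is positivity of $A$.)

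I expect the only point requiring a word of care is exactly this domain issue — not confusing $\mathcal{D}(B)$ with $\mathcal{D}(B^{1/2})$ — and it is resolved by the elementary identity $\ker B = \ker B^{1/2}$ for nonnegative self-adjoint $B$. Everything else is a one-line application of \eqref{AAA1}.
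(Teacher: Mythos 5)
Your proof is correct: the reduction to $\|A^{1/2}f_0\|\geq\|B^{1/2}f_0\|$ via \eqref{AAA1}, together with the identity $\ker B^{1/2}=\ker B$ for a nonnegative self-adjoint $B$, is exactly the standard argument, and you rightly flag and resolve the only delicate point (that $f_0$ a priori lies in $\mathcal{D}(B^{1/2})$ rather than $\mathcal{D}(B)$). The paper itself gives no proof --- it merely remarks that the lemma follows from \eqref{AAA1} and cites Schm\"udgen, Corollary 10.12 --- so your write-up simply supplies the details the authors omit.
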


Let $S$ be a nonnegative densely defined operator in $\cH$.  M. Krein established that the set of nonnegative self-adjoint
extensions $\{A\}$ of $S$ can be ordered as follows \cite[Theorem 3.5]{AT}: $A_F\geq{A}\geq{A_K}$,
where the greatest self-adjoint extension $A_F$ is called the Friedrichs extension, while the smallest one $A_K$
is called the Krein-von Neumann extension. 

The extensions $A_K$ and $A_F$ are examples of extremal extensions.  
We recall \cite{AK_Arlin}  that a nonnegative self-adjoint extension  $A$ of
$S$ is called \emph{extremal} if
\begin{equation}\label{bebe86}
\inf_{f\in\mathcal{D}(S)}{(A(\phi-f),(\phi-f))}=0 \quad \mbox{for all} \quad \phi\in\mathcal{D}(A).
\end{equation}

If a nonnegative self-adjoint extension $A$ of $S$ is positive, we can set $A=e^{-Q}$, where $Q=-\ln{A}$ is a self-adjoint operator in $\cH$ and 
define the new Hilbert space $\cH_{-Q}$ as the completion of  $\mathcal{D}(A)=\mathcal{D}(e^{-Q})$ 
with respect to the new inner product
\begin{equation}\label{new1}
(f, g)_{-Q}:=(e^{-Q}f, g)=(e^{-Q/2}f, e^{-Q/2}g),  \qquad f,g\in\mathcal{D}(e^{-Q}).
\end{equation}
 Rewriting \eqref{bebe86} as
$$
\inf_{f\in\mathcal{D}(S)}{(\phi-f, \phi-f)_{-Q}}=\inf_{f\in\mathcal{D}(S)}\|\phi-f\|_{-Q}^2=0 \quad \mbox{for all} \quad \phi\in\mathcal{D}(e^{-Q})
$$  
we obtain
 \begin{lelele}\label{AAA3}
 A positive self-adjoint extension $A=e^{-Q}$ of $S$ is an extremal extension if and only if 
 $\mathcal{D}(S)$ is a dense set in $\cH_{-Q}$.
  \end{lelele}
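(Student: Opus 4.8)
The plan is to deduce the claim directly from the reformulation of the extremality condition \eqref{bebe86} exhibited just above the statement. The one computation behind everything is the identity
\[
(A(\phi-f),\phi-f) \;=\; (e^{-Q}(\phi-f),\phi-f) \;=\; (\phi-f,\phi-f)_{-Q} \;=\; \|\phi-f\|_{-Q}^2 ,
\]
valid for all $\phi, f \in \mathcal{D}(A) = \mathcal{D}(e^{-Q})$; it rests on the definition \eqref{new1} together with the standard inclusion $\mathcal{D}(e^{-Q}) \subseteq \mathcal{D}(e^{-Q/2})$, which makes $(\cdot,\cdot)_{-Q}$ well defined on $\mathcal{D}(A)$. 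Since $A$ extends $S$ we have $\mathcal{D}(S) \subseteq \mathcal{D}(A)$, so letting $f$ run over $\mathcal{D}(S)$ in the identity turns \eqref{bebe86} into the statement that
\[
\inf_{f\in\mathcal{D}(S)}\|\phi-f\|_{-Q} = 0 \qquad \text{for every}\quad \phi \in \mathcal{D}(e^{-Q}),
\]
i.e.\ that $\mathcal{D}(S)$ is $\|\cdot\|_{-Q}$-dense in $\mathcal{D}(e^{-Q})$.

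It then only remains to pass between ``dense in $\mathcal{D}(e^{-Q})$'' and ``dense in $\mathcal{H}_{-Q}$''. For the direction ``extremal $\Rightarrow$ density'' I would use transitivity of density: by construction $\mathcal{H}_{-Q}$ is the completion of $\mathcal{D}(e^{-Q}) = \mathcal{D}(A)$ in the norm $\|\cdot\|_{-Q}$, so $\mathcal{D}(A)$ is $\|\cdot\|_{-Q}$-dense in $\mathcal{H}_{-Q}$; a subset of $\mathcal{D}(A)$ that is dense in $\mathcal{D}(A)$ is therefore dense in $\mathcal{H}_{-Q}$. For the converse ``density $\Rightarrow$ extremal'', if $\mathcal{D}(S)$ is dense in $\mathcal{H}_{-Q}$ then in particular each $\phi \in \mathcal{D}(A) \subseteq \mathcal{H}_{-Q}$ is a $\|\cdot\|_{-Q}$-limit of a sequence from $\mathcal{D}(S)$, which is exactly the reformulated \eqref{bebe86}; running the displayed identity backwards recovers \eqref{bebe86} in its original shape.

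I do not expect a genuine obstacle: the proof is a bookkeeping argument, and the ``hard part'' is merely to be explicit about three routine points. First, $(\cdot,\cdot)_{-Q}$ must be a true inner product, not just a semi-inner product, on $\mathcal{D}(e^{-Q})$; this needs injectivity of $e^{-Q/2}$, and it is precisely here that the hypothesis ``$A$ is \emph{positive}'' (rather than merely nonnegative) is used, since positivity gives $\ker A = \{0\}$, hence $\ker e^{-Q/2} = \{0\}$, and makes $Q = -\ln A$ a bona fide self-adjoint operator. Second, one uses $\mathcal{D}(S) \subseteq \mathcal{D}(A)$ so that the infimum in \eqref{bebe86} is taken over vectors of $\mathcal{H}_{-Q}$. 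Third, the transitivity-of-density step above. Collecting these yields the asserted equivalence.
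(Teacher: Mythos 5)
Your proposal is correct and follows exactly the paper's own route: the authors likewise rewrite \eqref{bebe86} via $(A(\phi-f),\phi-f)=\|\phi-f\|_{-Q}^2$ and read off density of $\mathcal{D}(S)$ in $\cH_{-Q}$, with your explicit handling of the completion step and of positivity (to make $(\cdot,\cdot)_{-Q}$ a genuine inner product) merely filling in details the paper leaves implicit.
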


A non-negative \emph{non-densely defined} symmetric operator $S$ admits self-adjoint extensions, but not necessarily non-negative ones.
The well-known Ando-Nishio result is \cite[Theorem 1]{Ando}:
 \begin{lelele}\label{AAA7}
 A closed non-negative symmetric operator $S$ admits a non-negative self-adjoint extension
 if and only if it is positively closable, i.e., if the relations
 $$
 \lim_{n\to\infty}(Sf_n, f_n)=0 \qquad \mbox{and} 
 \qquad  \lim_{n\to\infty}Sf_n=g
$$
implies $g=0$. 
\end{lelele}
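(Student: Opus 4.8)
\noindent The plan is to prove the two implications separately. Necessity is elementary. Suppose $A$ is a non-negative self-adjoint extension of $S$ and let $A^{1/2}$ be its (closed, non-negative) square root, so that $\mathcal{D}(S)\subseteq\mathcal{D}(A)\subseteq\mathcal{D}(A^{1/2})$ and $(Sf,f)=\|A^{1/2}f\|^{2}$ for $f\in\mathcal{D}(S)$. Given $f_{n}\in\mathcal{D}(S)$ with $(Sf_{n},f_{n})\to0$ and $Sf_{n}\to g$, I would put $h_{n}:=A^{1/2}f_{n}$; then $h_{n}\to0$ while $A^{1/2}h_{n}=Af_{n}=Sf_{n}\to g$, and closedness of $A^{1/2}$ forces $g=A^{1/2}0=0$. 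Hence $S$ is positively closable.

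For sufficiency, assume $S$ is positively closable. First I would pass to $T:=S+I$, which is closed, symmetric and satisfies $T\ge I$; hence $T$ is injective, $T^{-1}$ (with domain $\mathcal{R}(T)$) is a bounded positive symmetric operator with $\|T^{-1}\|\le1$, and, being a bounded closed operator, has closed domain $M:=\mathcal{R}(T)$. This reduces everything to a bounded-operator extension problem: it suffices to produce a bounded self-adjoint $C$ on $\cH$ with $0\le C\le I$, $\ker C=\{0\}$ and $C|_{M}=T^{-1}$, for then $A:=C^{-1}-I$ is non-negative self-adjoint and, from $C(Tf)=T^{-1}(Tf)=f$ for all $f\in\mathcal{D}(S)$, one gets $C^{-1}f=Tf$ and hence $Af=Sf$, i.e.\ $A\supseteq S$.

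To build $C$ I would decompose $\cH=M\oplus M^{\perp}$ (with orthogonal projections $P_{M},P_{M^{\perp}}$) and split $T^{-1}$ into $W:=P_{M}T^{-1}|_{M}$ --- a bounded self-adjoint operator on $M$, $0\le W\le I$ --- and $Y:=P_{M^{\perp}}T^{-1}|_{M}$, a contraction from $M$ to $M^{\perp}$. Self-adjointness forces every admissible extension into the block form $C=\bigl(\begin{smallmatrix}W & Y^{*}\\ Y & Z\end{smallmatrix}\bigr)$ with $Z=Z^{*}$ on $M^{\perp}$, and by the Shmul'yan factorization criterion for $2\times2$ operator matrices the two-sided bound $0\le C\le I$ amounts to finding a self-adjoint $Z$, $0\le Z\le I$, together with contractions $K,K'$ such that
\[
Y=Z^{1/2}K\,W^{1/2}\qquad\text{and}\qquad -\,Y=(I-Z)^{1/2}K'(I-W)^{1/2}.
\]

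The crucial --- and, I expect, hardest --- step is to show that positive closability of $S$ is exactly the property guaranteeing that such a $Z$ exists (and can then be perturbed so as to make $C$ injective): one must recast the implication ``$(Sf_{n},f_{n})\to0$, $Sf_{n}\to g$ $\Rightarrow$ $g=0$'' in terms of the vectors $g_{n}=Tf_{n}\in M$ and of $W,Y$, and read off from it the operator inequality that controls the completion. That this detour is genuinely needed is seen from the favourable sub-case $\mathcal{R}(S)\subseteq\overline{\mathcal{D}(S)}$, which requires no matrix analysis: there $S$ is a densely defined non-negative symmetric operator acting inside the Hilbert space $\overline{\mathcal{D}(S)}$, so it has a Friedrichs extension $A_{1}$ in that space (cf.\ Subsection~\ref{sec2.1}), and $A:=A_{1}\oplus0$ on $\overline{\mathcal{D}(S)}\oplus\overline{\mathcal{D}(S)}^{\perp}$ is a non-negative self-adjoint extension of $S$ --- such an $S$ being, by the necessity part, automatically positively closable. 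The difficulty is precisely that positive closability permits $\mathcal{R}(S)$ to leave $\overline{\mathcal{D}(S)}$, so that the off-diagonal term $Y$ can be non-trivial, and it is the control of $Y$ afforded by positive closability that is the heart of the matter.
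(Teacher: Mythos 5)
First, a point of reference: the paper does not prove Lemma \ref{AAA7} at all --- it is quoted as Theorem 1 of Ando--Nishio \cite{Ando} --- so your argument has to stand entirely on its own. The necessity half does: writing $(Sf_n,f_n)=\|A^{1/2}f_n\|^2$, deducing $A^{1/2}f_n\to0$, and invoking closedness of $A^{1/2}$ on the pairs $(A^{1/2}f_n,\,Af_n)$ is correct, and it rightly avoids assuming that $f_n$ itself converges.

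The sufficiency half is a plan, not a proof, and you say so yourself: the ``crucial --- and, I expect, hardest --- step'' is left entirely open. Moreover the hypothesis does not enter where you predict. From $T=S+I\ge I$ one gets, for $u=Tf\in M$, that $(T^{-1}u,u)=(f,Tf)\ge\|f\|^2=\|Wu\|^2+\|Yu\|^2$, i.e.\ $Y^*Y\le W-W^2$; by the very factorization criterion you cite this already guarantees, \emph{unconditionally}, a self-adjoint completion $C$ with $0\le C\le I$ (e.g.\ $Z=K(I-W)K^*$ where $Y=K(W-W^2)^{1/2}$ with $K$ a contraction). So the existence of $Z$ cannot be ``exactly'' positive closability --- if it were, every closed non-negative symmetric operator would admit a non-negative self-adjoint extension, contradicting the theorem. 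What positive closability must govern is whether some completion can be chosen with $\ker C=\{0\}$: otherwise $C^{-1}$ is only a linear relation, $A=C^{-1}-I$ is a non-negative self-adjoint \emph{relation} extending $S$ rather than an operator, and your formula for $A$ breaks down. Translating ``$(Sf_n,f_n)\to0$ and $Sf_n\to g$ imply $g=0$'' into a statement about the kernels of the admissible completions is the entire content of the implication, and it is absent; note also that the shift by $I$ makes this translation genuinely awkward, since $(Tf_n,f_n)=(Sf_n,f_n)+\|f_n\|^2$ need not tend to zero. For comparison, Ando and Nishio avoid the kernel issue altogether: they read positive closability as closability of $S$ regarded as an operator from the pre-Hilbert space $(\mathcal{D}(S),(S\cdot,\cdot))$ into $\cH$ and manufacture the Krein--von Neumann extension from the closure of the associated non-negative form.
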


Lemma \ref{AAA7} is evident for densely defined operators because 
each densely defined operator $S$ is positively closable \cite[p. 67]{Ando}.
Another useful result follows from \cite[Corollary 4]{Ando}:
 \begin{lelele}\label{AAA8}
 Let $S$ be a closed densely defined positive operator. Then $S^{-1}$ admits a non-negative 
 self-adjoint extension if and only if the Friedrich extension $A_F$ of $S$ is positive.
 \end{lelele}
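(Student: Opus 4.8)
The plan is to establish the two implications separately. The \emph{if} direction is immediate from the Krein ordering $A_F\geq A\geq A_K$ of non-negative self-adjoint extensions together with Lemma~\ref{AAA2}; the \emph{only if} direction I would obtain as the contrapositive of an application of Lemma~\ref{AAA7}. Throughout I use that, since $S$ is closed, $S^{-1}$ is a closed operator with $\mathcal{D}(S^{-1})=\mathcal{R}(S)$ (which need not be dense), and that positivity of $S$ makes $S^{-1}$ a non-negative symmetric operator: writing $g=Sf$, $g'=Sf'$ with $f,f'\in\mathcal{D}(S)$, one has $(S^{-1}g,g')=(f,Sf')=(Sf,f')=(g,S^{-1}g')$ and $(S^{-1}g,g)=(Sf,f)>0$ for $g\neq 0$. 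In particular Lemma~\ref{AAA7} applies to $S^{-1}$.

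For the \emph{if} part, suppose $A_F$ is positive. Then $\ker A_F=\{0\}$, hence $A_F$ is injective, $\mathcal{R}(A_F)$ is dense in $\cH$, and $A_F^{-1}$ is a (positive) self-adjoint operator. From $S\subseteq A_F$ one gets $S^{-1}\subseteq A_F^{-1}$ by passing to the flipped graphs, so $A_F^{-1}$ is a non-negative self-adjoint extension of $S^{-1}$, as required.

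For the \emph{only if} part I would argue by contraposition: assume $A_F$ is not positive and deduce that $S^{-1}$ is not positively closable, so that, by Lemma~\ref{AAA7}, $S^{-1}$ has no non-negative self-adjoint extension. Since $A_F$ is a non-negative self-adjoint operator that is not positive, there is $\phi\neq 0$ with $A_F\phi=0$, equivalently $A_F^{1/2}\phi=0$. Now invoke the description of the Friedrichs extension: the form domain $\mathcal{D}(A_F^{1/2})$, equipped with the norm $(\|f\|^2+\|A_F^{1/2}f\|^2)^{1/2}$, is the closure of $\mathcal{D}(S)$; hence there are $f_n\in\mathcal{D}(S)$ with $f_n\to\phi$ in $\cH$ and $A_F^{1/2}f_n\to A_F^{1/2}\phi=0$. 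Because $A_F$ extends $S$, $(Sf_n,f_n)=(A_Ff_n,f_n)=\|A_F^{1/2}f_n\|^2\to 0$. Putting $g_n:=Sf_n\in\mathcal{R}(S)$ we obtain $(S^{-1}g_n,g_n)=(Sf_n,f_n)\to 0$ while $S^{-1}g_n=f_n\to\phi\neq 0$, contradicting positive closability of $S^{-1}$.

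The step I expect to be the \emph{main obstacle} — or at least the only nonroutine one — is the appeal to the internal structure of the Friedrichs extension, namely that $\mathcal{D}(A_F^{1/2})$ is precisely the form-norm closure of $\mathcal{D}(S)$ inside $\cH$ and the consequent existence of the approximating sequence $\{f_n\}$. This rests on the closability of the quadratic form $f\mapsto(Sf,f)$ attached to the symmetric operator $S$ and on the first representation theorem, and I would quote it from \cite{AT} or \cite[Chap.~10]{Konrad} rather than redo it. If one prefers to avoid Lemma~\ref{AAA7} altogether, the \emph{only if} direction also follows directly: given a non-negative self-adjoint extension $B$ of $S^{-1}$, the same sequence yields $B^{1/2}g_n\to 0$ and $B^{1/2}(B^{1/2}g_n)=Bg_n=f_n\to\phi$, whence closedness of $B^{1/2}$ forces $\phi=0$; thus $\ker A_F=\{0\}$ and $A_F$ is positive.
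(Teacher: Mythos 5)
Your argument is correct, but it is genuinely different from what the paper does: the paper gives no proof of Lemma \ref{AAA8} at all, simply importing it as a consequence of \cite[Corollary 4]{Ando}, whereas you derive it from the Ando--Nishio criterion (Lemma \ref{AAA7}) combined with the form-theoretic description of the Friedrichs extension. Both halves of your proof check out: for the \emph{if} part, positivity of $A_F$ gives $\ker A_F=\{0\}$, hence $\mathcal{R}(A_F)$ dense and $A_F^{-1}$ a positive self-adjoint operator containing $S^{-1}$ (graph inclusion), which is exactly what is needed; for the \emph{only if} part, the existence of $f_n\in\mathcal{D}(S)$ with $f_n\to\phi$ and $\|A_F^{1/2}f_n\|\to 0$ for any $\phi\in\ker A_F$ is indeed guaranteed by the first representation theorem ($\mathcal{D}(A_F^{1/2})$ is the form-norm closure of $\mathcal{D}(S)$), and setting $g_n=Sf_n$ correctly violates positive closability of $S^{-1}$. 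Your Lemma-\ref{AAA7}-free variant via the closedness of $B^{1/2}$ is also sound and is arguably the cleanest route, since it makes the lemma self-contained modulo only the standard Friedrichs construction. One cosmetic slip: your opening sentence attributes the \emph{if} direction to the Krein ordering plus Lemma \ref{AAA2}, which is not what your second paragraph actually uses (and the ordering of extensions of $S$ does not by itself produce an extension of $S^{-1}$); the $A_F^{-1}$ construction you then give is the right argument, so nothing is lost, but the planning sentence should be deleted or corrected. What your approach buys is a proof readable within the paper's own toolkit; what the paper's citation buys is brevity and access to the sharper statements in \cite{Ando} (e.g.\ the identification of which extension of $S^{-1}$ one obtains), which your direct argument does not reproduce.
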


\subsection{Conditions of being GRS}
 Let $\{\phi_n\}$ be a GRS. In view of Definition \ref{d1}, 
the sequence  $\{\psi_n=e^{-Q/2}e_n\}$
is well defined and it is a bi-orthogonal sequence for $\{\phi_n=e^{Q/2}e_n\}$.
 Obviously, $\{\psi_n\}$ is a GRS which we call \emph{a dual GRS}.

The existence of a bi-orthogonal sequence means that each GRS $\{\phi_n\}$ has to be 
a \textit{minimal sequence}, i.e.,   $\phi_j\notin\overline{span}{\{\phi_k\}_{k\not=j}}$ \cite[Lemma 3.3.1]{Christ}.
However, not each minimal sequence is a GRS.  

We say that a minimal sequence $\{\phi_n\}$  is 
\emph{semi-regular} if  $\{\phi_n\}$  is complete in $\cH$ and 
\emph{regular} if its  bi-orthogonal sequence $\{\psi_n\}$ is also complete.
For a semi-regular sequence $\{\phi_n\}$ the corresponding bi-orthogonal sequence $\{\psi_n\}$ is
determined uniquely.

Let $\{\phi_n\}$ be a minimal sequence. Then there exists a bi-orthogonal sequence  $\{\psi_n\}$ and 
we can consider an  operator $S$ defined initially on  
	\begin{equation}\label{new14}
	S\phi_n=\psi_n 
	\end{equation}
and extended on ${\mathcal D}(S)=span\{\phi_n\}$  by the linearity. 
By the construction, 
	$$ 
	(Sf, f)=\sum_{n=1}^k\sum_{m=1}^kc_n\overline{c}_m(\psi_n, \phi_m)=\sum_{n=1}^k{|c_n|^2} \quad \mbox{for all} \quad f=\sum_{n=1}^k{c_n}\phi_n\in{\mathcal{D}(S)}.
	$$ 
Therefore, $S$ is a positive operator.  For a semi-regular sequence $\{\phi_n\}$,  the operator $S$ is densely defined and the 
Friedrichs extension $A_F$ of $S$ exists.
 
\begin{ttt}\label{new31}  	 	
Let $\{\phi_n\}$ be a semi-regular sequence. The following are equivalent:
\begin{enumerate}
\item[(i)] $\{\phi_n\}$ is a GRS;
\item[(ii)] the Friedrichs extension $A_F$ of $S$ is a positive operator;
\item[(iii)] the closure $\overline{S}$ of $S$ is a positive operator and  the relations
\begin{equation}\label{KKK1}
\lim_{n\to\infty}(\overline{S}f_n, f_n)=0 \qquad \mbox{and} 
 \qquad  \lim_{n\to\infty}f_n=g
\end{equation}
imply that $g=0$.
\end{enumerate}
\end{ttt}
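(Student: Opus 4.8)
The plan is to establish the cycle (i) $\Rightarrow$ (ii) $\Rightarrow$ (iii) $\Rightarrow$ (i), using the preliminary lemmas of Subsection \ref{sec2.1} as the main engine. For (i) $\Rightarrow$ (ii): if $\{\phi_n\}$ is a GRS, then $\phi_n=e^{Q/2}e_n$ and $\psi_n=e^{-Q/2}e_n$ for some self-adjoint $Q$ and ONB $\{e_n\}$. I would like to identify the positive operator $e^{-Q}$ with a nonnegative self-adjoint extension of $S$. Indeed, for $f=\sum c_n\phi_n\in\mathcal D(S)$ we have $f=e^{Q/2}\big(\sum c_n e_n\big)$, so $f\in\mathcal D(e^{-Q/2})$ with $e^{-Q/2}f=\sum c_n e_n$; applying $e^{-Q/2}$ once more gives $e^{-Q}f=\sum c_n e^{-Q/2}e_n=\sum c_n\psi_n=Sf$. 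Hence $e^{-Q}\supseteq S$, i.e.\ $e^{-Q}$ is a positive self-adjoint extension of $S$. Since $A_F\ge A$ for every nonnegative self-adjoint extension $A$ of $S$, in particular $A_F\ge e^{-Q}$, and $e^{-Q}$ is positive; Lemma \ref{AAA2} then forces $A_F$ to be positive.

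For (ii) $\Rightarrow$ (iii): the closure $\overline S$ always satisfies $A_F\ge \overline S$ in the quadratic-form sense (since $A_F$ is a form-extension of $\overline S$), so positivity of $A_F$ gives positivity of $\overline S$ again by Lemma \ref{AAA2}. For the implication (\ref{KKK1}), I would argue that $\overline S$ is positively closable in the sense of Lemma \ref{AAA7}: if it were not, then by Ando--Nishio $\overline S$ would have \emph{no} nonnegative self-adjoint extension, contradicting the existence of $A_F$. One must be a little careful here because the standard Ando--Nishio criterion concerns the relations $\lim (\overline S f_n,f_n)=0$, $\lim \overline S f_n=g$ with conclusion $g=0$, whereas (\ref{KKK1}) asks for the conclusion $g=0$ from $\lim(\overline S f_n,f_n)=0$ and $\lim f_n=g$ — i.e.\ it is a statement about $\overline S^{-1}$. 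This is where Lemma \ref{AAA8} enters: applied to the closed densely defined positive operator $\overline S$, it says $\overline S^{-1}$ admits a nonnegative self-adjoint extension iff $A_F$ is positive, which is exactly hypothesis (ii); then the positive-closability of $\overline S^{-1}$, spelled out via Lemma \ref{AAA7}, is precisely the displayed implication (\ref{KKK1}).

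For (iii) $\Rightarrow$ (i): from positivity of $\overline S$ together with (\ref{KKK1}) — equivalently, positive-closability of $\overline S^{-1}$ — Lemma \ref{AAA8} (read in the reverse direction, or combined with \ref{AAA7}) gives that $A_F$ is positive. Set $A_F=e^{-Q}$ with $Q=-\ln A_F$ self-adjoint; this is legitimate precisely because $A_F>0$. It remains to produce the ONB $\{e_n\}$. The natural candidate is $e_n:=e^{-Q/2}\phi_n$. One checks $(e_n,e_m)=(e^{-Q/2}\phi_n,e^{-Q/2}\phi_m)=(e^{-Q}\phi_n,\phi_m)=(A_F\phi_n,\phi_m)=(S\phi_n,\phi_m)=(\psi_n,\phi_m)=\delta_{nm}$, using that $A_F$ extends $S$ and $\phi_n\in\mathcal D(S)$. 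So $\{e_n\}$ is orthonormal; by construction $\phi_n=e^{Q/2}e_n$ and $\psi_n=e^{-Q/2}e_n$ (the latter since $e^{-Q}\phi_n=\psi_n$), and $e_n\in\mathcal D(e^{Q/2})\cap\mathcal D(e^{-Q/2})$ automatically. The only remaining point — and I expect this to be the main obstacle — is completeness of $\{e_n\}$ in $\cH$, so that it is a genuine ONB rather than merely an orthonormal system. Here one uses that $\{\phi_n\}$ is complete (semi-regularity): if $v\perp e_n$ for all $n$, one wants to transfer this to an orthogonality relation against the $\phi_n$'s inside the Hilbert space $\cH_{-Q}$ of (\ref{new1}). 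Concretely, $\mathcal D(S)=\mathrm{span}\{\phi_n\}$ is dense in $\cH$ (semi-regularity) and, because $A_F$ is the \emph{Friedrichs} extension, it is also dense in $\cH_{+Q}$ (the form domain of $A_F$); transporting via $e^{-Q/2}$, the span of $\{e_n\}$ is dense in $\cH$, which yields completeness. Making this density transfer precise — i.e.\ controlling the interplay between the three topologies on $\mathcal D(S)$ coming from $\cH$, from $\cH_{-Q}$, and from the form of $A_F$ — is the delicate part of the argument; everything else is bookkeeping with the polar/functional-calculus identities.
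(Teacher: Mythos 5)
Your proposal is correct and runs along essentially the same lines as the paper's proof: the implication $(i)\to(ii)$ via Lemma \ref{AAA2}, the equivalence of $(ii)$ and \eqref{KKK1} via the positive closability of $\overline{S}^{-1}$ and Lemmas \ref{AAA7}--\ref{AAA8}, and the construction $e_n=e^{-Q/2}\phi_n$ with $A_F=e^{-Q}$ are all exactly the paper's steps. The one place where you genuinely diverge is the completeness of $\{e_n\}$, which you correctly identify as the crux. The paper argues dually: given $\gamma\perp e_n$, it lifts $\gamma$ to a limit $f$ in the completion $\cH_{-Q}$ of \eqref{new1}, shows $(f,\phi_n)_{-Q}=0$, and invokes Lemma \ref{AAA3} (density of $\mathcal{D}(S)$ in $\cH_{-Q}$, i.e.\ extremality of $A_F$) to conclude $f=0$ and hence $\gamma=0$. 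Your route --- $\mathcal{D}(S)$ is dense in $\mathcal{D}(A_F^{1/2})$ in the form norm by the defining property of the Friedrichs extension, and $e^{-Q/2}$ is contractive from that norm to $\cH$ with dense range, so $\mathrm{span}\{e_n\}=e^{-Q/2}\mathcal{D}(S)$ is dense in $\cH$ --- is sound and in fact closes in two lines; the ``delicate density transfer'' you worry about needs no further topology-juggling. What the paper's $\cH_{-Q}$ formulation buys instead is that it works verbatim for \emph{any} positive extremal extension, not just $A_F$, which is precisely what is exploited later for the nonuniqueness of $Q$ (Proposition \ref{KKK5}). One small correction: in $(ii)\to(iii)$ you cite Lemma \ref{AAA2} to pass positivity from $A_F$ down to $\overline{S}$, but that lemma transfers positivity upward ($A\geq B$ with $B$ positive gives $A$ positive), and the downward implication is false in general for the form order; here it is simply immediate because $\overline{S}\subset A_F$ as operators, so $(\overline{S}f,f)=(A_Ff,f)>0$ for nonzero $f\in\mathcal{D}(\overline{S})$.
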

\begin{proof} $(i)\to(ii)$.
If $\{\phi_n\}$ is a GRS, then
$e^{-Q}\phi_n=e^{-Q/2}e_n=\psi_n$.  In view of \eqref{new14},  
  $e^{-Q}$ is  a positive self-adjoint extension of $S$ and
 $A_F\geq{e^{-Q}}$ since the Friedrichs extension is the greatest nonnegative
self-adjoint extension of $S$. By virtue of  Lemma \ref{AAA2},  $A_F$ is positive. 

$(ii)\to(i)$. The positivity of $A_F$  means that $A_F=e^{-Q}$,  where $Q$ is a self-adjoint operator in $\cH$. 
Denote $e_n=e^{-Q/2}\phi_n$. Due to \eqref{new14}, $e_n=e^{Q/2}\psi_n$. Therefore,  
	$e_n\in\mathcal{D}(e^{Q/2})\cap\mathcal{D}(e^{-Q/2})$  and $(e_n, e_m)=(e^{-Q/2}\phi_n,  e^{Q/2}\psi_m)=(\phi_n, \psi_m)=\delta_{nm}$.

The orthonormal sequence  $\{e_n\}$  turns out to be an ONB if  $\{e_n\}$ is complete in $\cH$.
Assume that $\gamma$ is orthogonal to $\{e_n\}$ in $\cH$. Then there exists 
a sequence $\{f_m\}$  $(f_m\in\mathcal{D}(e^{-Q}))$ such that  $e^{-Q/2}{f_m}\to\gamma$ in $\cH$ 
(because $e^{-Q/2}\mathcal{D}(e^{-Q})$ is a dense set in $\cH$). 
 In this case, due to \eqref{new1},  $\{f_m\}$ is a Cauchy sequence in $\cH_{-Q}$ and therefore,  ${f}_m$ tends to some $f\in\cH_{-Q}$. 
This means that	
    \begin{equation}\label{neww4}
	0=(\gamma,  e_n)=\lim_{m\to\infty}(e^{-Q/2}{f_m}, e_n)=\lim_{m\to\infty}(f_m, \phi_n)_{-Q}=(f, \phi_n)_{-Q}.
	\end{equation}

By Lemma \ref{AAA3},  the set $\mathcal{D}(S)=span\{\phi_n\}$ is dense in the Hilbert space $\cH_{-Q}$. 
 In view of \eqref{neww4}, $f=0$ that means
$\lim_{m\to\infty}\|f_m\|_{-Q}=\lim_{m\to\infty}\|e^{-Q/2}f_m\|=0$ and therefore,  $\gamma=0$.
		 Hence,  $\{e_n\}$ is complete in $\cH$ and $\{e_n\}$ is an ONB of $\cH$.
 		 
The implication $(ii)\to(iii)$ is obvious.

$(iii)\to(ii)$.  The operator $\overline{S}$ has the inverse ${\overline{S}}^{-1}$ since $\overline{S}$ is positive.
The operator ${\overline{S}}^{-1}$ is closed and \eqref{KKK1} can be rewritten as follows:
$$
\lim_{n\to\infty}(g_n, {\overline{S}}^{-1}g_n)=0 \qquad \mbox{and}  \qquad  \lim_{n\to\infty}{\overline{S}}^{-1}g_n=g \qquad (g_n=\overline{S}f_n) 
$$ 		 
This means that ${\overline{S}}^{-1}$ is positively closable. Hence, it admits a non-negative self-adjoint extension (Lemma \ref{AAA7}) 
 Applying Lemma \ref{AAA8} we complete the proof.
\end{proof}
\begin{ccc}\label{KKK2}
A regular sequence is a GRS. 
	\end{ccc}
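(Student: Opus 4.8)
The plan is to deduce the corollary from Theorem~\ref{new31}. A regular sequence $\{\phi_n\}$ is minimal, complete, and its biorthogonal sequence $\{\psi_n\}$ is also complete; in particular it is semi-regular, so Theorem~\ref{new31} applies and it suffices to verify condition (iii) for the positive operator $S$ of \eqref{new14}. The only input coming from regularity, beyond semi-regularity, is the completeness of $\{\psi_n\}$, i.e.\ that the range $\mathcal{R}(S)=span\{\psi_n\}$, and hence $\mathcal{R}(\overline{S})$, is dense in $\cH$.

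First I would check that $\overline{S}$ is positive. It is closed, densely defined, symmetric and nonnegative, being the closure of such an operator. If $f\in\mathcal{D}(\overline{S})$ satisfies $(\overline{S}f,f)=0$, then the Cauchy--Schwarz inequality for the nonnegative form $(\overline{S}\,\cdot\,,\cdot)$ gives $|(\overline{S}f,\phi_n)|^2\le(\overline{S}f,f)\,(\overline{S}\phi_n,\phi_n)=0$ for every $n$; since $\overline{S}$ is symmetric and $\phi_n\in\mathcal{D}(S)\subseteq\mathcal{D}(\overline{S})$ with $\overline{S}\phi_n=\psi_n$, this yields $(f,\psi_n)=(\overline{S}f,\phi_n)=0$ for all $n$, whence $f=0$ by completeness of $\{\psi_n\}$. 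Thus $(\overline{S}f,f)>0$ for $f\neq 0$.

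Next I would verify the limit condition in (iii). Assume $\lim_{n}(\overline{S}f_n,f_n)=0$ and $\lim_{n}f_n=g$. For any $h\in\mathcal{D}(\overline{S})$, Cauchy--Schwarz again gives $|(\overline{S}f_n,h)|^2\le(\overline{S}f_n,f_n)\,(\overline{S}h,h)\to 0$, while symmetry of $\overline{S}$ together with $f_n\to g$ gives $(\overline{S}f_n,h)=(f_n,\overline{S}h)\to(g,\overline{S}h)$. Hence $(g,\overline{S}h)=0$ for all $h\in\mathcal{D}(\overline{S})$, i.e.\ $g\perp\mathcal{R}(\overline{S})\supseteq span\{\psi_n\}$, and completeness of $\{\psi_n\}$ forces $g=0$. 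So condition (iii) of Theorem~\ref{new31} holds and $\{\phi_n\}$ is a GRS.

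There is essentially no hard step: everything reduces to the density of $span\{\psi_n\}$. Alternatively, one could argue through the inverse: $(\overline{S})^{-1}$ is a closed, densely defined (its domain is $\mathcal{R}(\overline{S})$), nonnegative symmetric operator, hence positively closable, so by Lemma~\ref{AAA7} it admits a nonnegative self-adjoint extension; Lemma~\ref{AAA8} applied to $\overline{S}$ then gives that the Friedrichs extension $A_F$ of $S$ is positive, which is condition (ii). In either approach the only point requiring care is the verification that $\overline{S}$ is genuinely positive and not merely nonnegative, which is exactly what the second paragraph above settles.
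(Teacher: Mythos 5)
Your proof is correct, but it takes a different route through Theorem~\ref{new31} than the paper does. You verify condition (iii): you prove positivity of $\overline{S}$ and the implication \eqref{KKK1} directly, both times via the Cauchy--Schwarz inequality for the nonnegative form $(\overline{S}\,\cdot\,,\cdot)$ combined with the density of $span\{\psi_n\}=\mathcal{R}(S)$. The paper instead verifies condition (ii) in one line: since regularity means $\mathcal{R}(S)$ is dense, every nonnegative self-adjoint extension $A$ of $S$ has dense range $\mathcal{R}(A)\supseteq\mathcal{R}(S)$, hence trivial kernel, hence is positive --- in particular $A_F$ is. Both arguments rest on exactly the same input (density of the range of $S$), so the difference is one of economy rather than substance: the paper's observation is shorter and makes transparent that \emph{every} nonnegative self-adjoint extension is positive (a fact it reuses later, in Subsection on $J$-orthonormal sequences, to conclude that every extremal extension of $S$ is positive), while your version is more self-contained and makes explicit why the closability-type condition \eqref{KKK1} holds, without appealing to properties of self-adjoint extensions at all. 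Your sketched alternative via $(\overline{S})^{-1}$ and Lemmas~\ref{AAA7}--\ref{AAA8} is also valid and essentially retraces the proof of $(iii)\to(ii)$ inside Theorem~\ref{new31}.
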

\begin{proof}
Let $\{\phi_n\}$ be a regular sequence. Its regularity means that $\mathcal{R}(S)$ is a dense set in $\cH$.
The latter means that each nonnegative self-adjoint extension $A$ of $S$ must be positive.  In particular,
the Friedrichs extension  $A_F$ is  positive. By Theorem \ref{new31},  $\{\phi_n\}$
is a GRS. 
 \end{proof}	

A shifting of the orthonormal Hermite functions  $e_n(x)$ in the complex plane gives rise to regular sequences in $L_2(\mathbb{R})$.
 In particular, eigenfunctions of the shifted harmonic oscillator $\{\phi_n(x)=e_n(x+ia)\}$ form a regular sequence and $\phi_n=e^{Q/2}e_n$, where
 $Q=2ai\frac{d}{dx}$ is an unbounded self-adjoint operator in $L_2(\mathbb{R})$  \cite[Subsection IV.1]{Kuzhel}.  We refer \cite{Inoue1, Inoue} for 
 the relationship between general regular sequences and some physical operators.
   
\begin{rem}
Theorem \ref{new31} can be  generalized to the case of non-complete minimal sequence $\{\phi_n\}$ such that
its bi-orthogonal $\{\psi_n\}$ is also non-complete. 
In this case, $S$ is a non-densely defined positive symmetric operator in $\cH$. We should suppose the existence
of a positive self-adjoint extension $A$ of $S$.
Similarly to the proof of Theorem  \ref{new31} we set $A=e^{-Q}$ and determine orthonormal system $\{e_n\}$ in $\cH$.
By virtue of \eqref{neww4},  the completeness of $\{e_n\}$ in $\cH$ is equivalent to the completeness of $\{\phi_n\}$ in $\cH_{-Q}$. 
This means that $\{e_n\}$ is an ONB in $\cH$ if $\{\phi_n\}$ is a complete set in $\cH_{-Q}$.
Summing up: \emph{Let $\{\phi_n\}$ be a minimal sequence and let $\{\psi_n\}$ be its bi-orthogonal sequence. These sequences  are GRS if and only if
 there exists a positive self-adjoint extension $A=e^{-Q}$ of $S$ such that $\{\phi_n\}$ is a complete set in the Hilbert space  $\cH_{-Q}$.}
 
 Another approach to the study of non-complete minimal sequences can be found in \cite{BIT}.
\end{rem}

\subsection{The uniqueness of $Q$ in the definition of GRS}\label{subsec2.3}

Let $\{\phi_n\}$ be a basis in $\cH$.  Then $\{\phi_n\}$ is a regular sequence because
its bi-orthogonal sequence $\{\psi_n\}$ has to be a basis \cite[Corollary 5.22]{Heil}.  By Corollary \ref{KKK2}, 
$\{\phi_n\}$ is a GRS, i.e., $\phi_n=e^{Q/2}e_n$.  Moreover,
by \cite[Proposition II.9]{Kuzhel},  
the pair $(Q, \{e_n\})$ in Definition \ref{d1} is determined \emph{uniquely} for every basis $\{\phi_n\}$.
For this reason,  a natural question arise: \emph{is the pair $(Q, \{e_n\})$ determined uniquely for a  given GRS $\{\phi_n\}$?} 

The choice of the Friedrichs extension $A_F=e^{-Q}$ of $S$ in the proof of Theorem \ref{new31} was related to the fact 
that  $\mathcal{D}(S)$ must be dense in the Hilbert space $\cH_{-Q}$ 
(that, in view of \eqref{neww4}, is equivalent to the completeness of $\{e_n\}$ in $\cH$). 
Due to Lemma \ref{AAA3}, each positive extremal extension $A=e^{-Q}$ can be used instead of $A_F$ in the proof of Theorem \ref{new31}.
This observation leads to the following result (\cite[Proposition II.10]{Kuzhel}):

\begin{ppp}\label{KKK5}
Let a semi-regular sequence $\{\phi_n\}$ be  a GRS. Then 
 a self-adjoint operator $Q$ and an ONB  $\{e_n\}$  are determined uniquely in the formula $\phi_n=e^{Q/2}e_n$ 
 if and only the symmetric operator $S$ in \eqref{new14} has a unique positive extremal extension.
\end{ppp}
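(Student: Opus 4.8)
The plan is to set up a bijection between the pairs $(Q,\{e_n\})$ that realise $\{\phi_n\}$ as a GRS (i.e.\ $Q=Q^*$, $\{e_n\}$ an ONB, $e_n\in\mathcal D(e^{Q/2})\cap\mathcal D(e^{-Q/2})$, $\phi_n=e^{Q/2}e_n$) and the positive extremal self-adjoint extensions of the operator $S$ of \eqref{new14}; once this correspondence is in hand the proposition is immediate. A preliminary remark reduces the task: if $\phi_n=e^{Q/2}e_n$ with $e_n\in\mathcal D(e^{-Q/2})$, applying $e^{-Q/2}$ and using the spectral calculus of $Q$ (so that $e^{-Q/2}e^{Q/2}e_n=e_n$) forces $e_n=e^{-Q/2}\phi_n$. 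Thus a pair is completely determined by its operator $Q$, and uniqueness of the pair is the same as uniqueness of an admissible $Q$.

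Next I would analyse the map $Q\mapsto A:=e^{-Q}$ on the admissible $Q$'s. It lands among the positive extremal self-adjoint extensions of $S$: from $\phi_n=e^{Q/2}e_n$ one computes $e^{-Q}\phi_n=e^{-Q/2}e_n=:\psi_n$ with $(\phi_n,\psi_m)=(e_n,e_m)=\delta_{nm}$, so $\{\psi_n\}$ is the biorthogonal sequence --- unique, since $\{\phi_n\}$ is semi-regular --- and $A$ therefore extends $S$ on $\mathcal D(S)=\mathrm{span}\{\phi_n\}$ by \eqref{new14}; $A=e^{-Q}$ is positive and self-adjoint; and, by Lemma \ref{AAA3}, $A$ is extremal precisely when $\mathcal D(S)$ is dense in $\cH_{-Q}$, which by the computation \eqref{neww4} in the proof of Theorem \ref{new31} is equivalent to completeness of $\{e_n\}$ in $\cH$, true because $\{e_n\}$ is an ONB. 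This map is injective, since $A=e^{-Q}$ recovers $Q=-\ln A$ through the functional calculus. It is surjective: given a positive extremal extension $A=e^{-Q}$ of $S$, the construction already carried out in the proof of Theorem \ref{new31}, namely $e_n:=e^{-Q/2}\phi_n=e^{Q/2}\psi_n$, produces an admissible pair, extremality of $A$ ensuring --- again via Lemma \ref{AAA3} and \eqref{neww4} --- that $\{e_n\}$ is complete and hence an ONB.

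Putting the pieces together, $\{\phi_n\}$ has a unique representation $\phi_n=e^{Q/2}e_n$ iff there is a unique admissible $Q$, and, by the bijection just described, this holds iff $S$ has a unique positive extremal self-adjoint extension. (This set is never empty here: $\{\phi_n\}$ is a GRS, so by Theorem \ref{new31} the Friedrichs extension $A_F$ is positive, and $A_F$ is always extremal.)

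I expect the only genuine obstacle to be the unbounded-operator bookkeeping: one must check that $e^{-Q/2}\phi_n$ and $e^{Q/2}\psi_n$ are well defined and that the cancellations $e^{-Q/2}e^{Q/2}e_n=e_n$, $e^{-Q}\!\left(e^{Q/2}e_n\right)=e^{-Q/2}e_n$ and $(\phi_n,\psi_m)=(e_n,e_m)$ are legitimate. Each of these reduces, via the spectral measure $E_\lambda$ of $Q$, to the finiteness of integrals such as $\int e^{\pm\lambda}\,d\|E_\lambda e_n\|^2$ and $\int d\|E_\lambda e_n\|^2=\|e_n\|^2$, which is exactly what $e_n\in\mathcal D(e^{Q/2})\cap\mathcal D(e^{-Q/2})$ supplies; one also uses $\mathcal D(e^{-Q})\subseteq\mathcal D(e^{-Q/2})$. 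The conceptual content --- extremality $\Leftrightarrow$ density of $\mathcal D(S)$ in $\cH_{-Q}$ $\Leftrightarrow$ completeness of $\{e_n\}$ --- is already isolated in Lemma \ref{AAA3} and in \eqref{neww4}, so no idea beyond the proof of Theorem \ref{new31} is needed.
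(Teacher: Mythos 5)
Your proposal is correct and takes essentially the same route as the paper: the paper derives Proposition \ref{KKK5} precisely from the observation that, by Lemma \ref{AAA3} and the computation \eqref{neww4}, any positive \emph{extremal} extension $A=e^{-Q}$ of $S$ can replace $A_F$ in the proof of Theorem \ref{new31} (extremality $\Leftrightarrow$ density of $\mathcal{D}(S)$ in $\cH_{-Q}$ $\Leftrightarrow$ completeness of $\{e_n\}$), and conversely that any admissible pair $(Q,\{e_n\})$ produces such an extension. Your explicit bijection, including the remark that $e_n=e^{-Q/2}\phi_n$ is forced so the pair is determined by $Q$ alone, is just a fleshed-out version of that correspondence.
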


\begin{rem}
The above mentioned unique positive extremal extension coincides with the Friedrichs extension
$A_F$. Indeed, the existence of another positive extension $A\not=A_F$ means
that $A_F$ is also positive (Lemma \ref{AAA2}). Due to the uniqueness of positive extension, we get $A=A_F$.
\end{rem}

\subsection{Bases and Bessel sequences}
 Various classes of GRS's can be easily characterized in terms of  spectral properties of the corresponding self-adjoint  
operators $Q$. 
 
We recall that a semi-regular sequence $\{\phi_n\}$ is called  \emph{a Riesz basis} if there exists $0<a\leq{b}$ such that\footnote{we refer \cite[Theorems 7.13, 8.32]{Heil} 
for equivalent definitions of Riesz basis} 
\begin{equation}\label{KK1}
{a}\|f\|^2\leq\sum_{n}|(f, \phi_n)|^2\leq{b}\|f\|^2  \quad  \mbox{for all} \quad  f\in\mathcal{H}.
\end{equation}

\begin{ppp}\label{KKK5b}
The following are equivalent:
\begin{enumerate}
\item[(i)] a sequence $\{\phi_n\}$ is a Riesz basis with bounds $0<a\leq{b}$;
\item[(ii)] $\{\phi_n\}$ is a GRS, i.e.,  $\phi_n=e^{Q/2}e_n$, where $Q$ is bounded self-adjoint operator such
that  $\sigma(Q)\subset[\ln{a}, \ln{b}]$.
\end{enumerate}
\end{ppp}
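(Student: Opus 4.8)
The plan is to prove the two implications separately, exploiting the spectral calculus for the bounded self-adjoint operator $Q$ together with the characterization of Riesz bases via the frame-type inequalities \eqref{KK1}. Throughout I will use that for a GRS with $\phi_n = e^{Q/2}e_n$, the dual sequence is $\psi_n = e^{-Q/2}e_n$, and that $\{e_n\}$ being an ONB gives Parseval's identity $\|g\|^2 = \sum_n |(g,e_n)|^2$ for all $g\in\cH$.

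For $(ii)\to(i)$: assume $\phi_n = e^{Q/2}e_n$ with $Q$ bounded self-adjoint and $\sigma(Q)\subset[\ln a,\ln b]$. Then $e^{Q/2}$ is bounded and boundedly invertible, with $\sigma(e^{Q/2})\subset[a^{1/2},b^{1/2}]$ by the spectral mapping theorem; in particular $a^{1/2}\|h\|\le\|e^{Q/2}h\|\le b^{1/2}\|h\|$ for all $h\in\cH$. Since $e^{Q/2}$ is self-adjoint, for any $f\in\cH$ we have $(f,\phi_n) = (f,e^{Q/2}e_n) = (e^{Q/2}f, e_n)$, so by Parseval applied to $g = e^{Q/2}f$,
\begin{equation}\label{propproof1}
\sum_n |(f,\phi_n)|^2 = \sum_n |(e^{Q/2}f, e_n)|^2 = \|e^{Q/2}f\|^2.
\end{equation}
Combining \eqref{propproof1} with the norm bounds on $e^{Q/2}$ yields exactly $a\|f\|^2 \le \sum_n |(f,\phi_n)|^2 \le b\|f\|^2$, i.e.\ \eqref{KK1}. (One should also note that $\{\phi_n\}$ is automatically semi-regular here: it is a GRS, hence minimal, and completeness follows since $e^{Q/2}$ is a bounded bijection carrying the complete system $\{e_n\}$ onto $\{\phi_n\}$.)

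For $(i)\to(ii)$: suppose $\{\phi_n\}$ is a Riesz basis with bounds $0<a\le b$. By Corollary \ref{KKK2} (a Riesz basis is regular, since both it and its biorthogonal sequence are bases, hence complete) $\{\phi_n\}$ is a GRS, so we may write $\phi_n = e^{Q/2}e_n$ for some self-adjoint $Q$ and ONB $\{e_n\}$; the content to be proved is that $Q$ is \emph{bounded} with spectrum in $[\ln a,\ln b]$. Using \eqref{propproof1}, which holds for $f\in\mathcal{D}(e^{Q/2})$ by the same computation, the Riesz bounds give $a\|f\|^2 \le \|e^{Q/2}f\|^2 \le b\|f\|^2$ on $\mathcal{D}(e^{Q/2})$. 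The upper bound shows $e^{Q/2}$ is a bounded operator, hence everywhere defined and bounded self-adjoint; the lower bound, applied analogously to $f\in\mathcal{D}(e^{-Q/2})$ with the dual sequence $\psi_n = e^{-Q/2}e_n$ (which is also a Riesz basis, with bounds $1/b\le 1/a$), shows $e^{-Q/2}$ is bounded, so $e^{Q/2}$ is boundedly invertible. Then $\sigma(e^{Q})=\sigma((e^{Q/2})^*e^{Q/2})\subset[a,b]$ from $a\|f\|^2\le\|e^{Q/2}f\|^2=(e^Qf,f)\le b\|f\|^2$, and applying the spectral mapping theorem to the continuous function $\ln$ gives $\sigma(Q)\subset[\ln a,\ln b]$, as required.

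The main obstacle is a small but genuine subtlety in the $(i)\to(ii)$ direction: a priori the GRS decomposition furnished by Theorem \ref{new31} only tells us that $e^{Q/2}$ is densely defined (on $\mathcal{D}(e^{Q/2})\cap\mathcal{D}(e^{-Q/2})\supseteq\{e_n\}$), so one must be careful that \eqref{propproof1} and the ensuing inequalities are legitimately upgraded from "on a dense domain" to "everywhere," and that the lower Riesz bound really does force $e^{-Q/2}$ (and not merely $e^{Q/2}$) to be bounded. This is where invoking the dual GRS $\{\psi_n\}$ and its Riesz-basis status with reciprocal bounds is essential; once both $e^{\pm Q/2}$ are known bounded, the spectral-mapping argument is routine. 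One should also double-check that the \emph{same} pair $(Q,\{e_n\})$ simultaneously realizes both $\phi_n=e^{Q/2}e_n$ and $\psi_n=e^{-Q/2}e_n$ — which it does by the construction in the proof of Theorem \ref{new31}, since there $e_n$ is defined as $e^{-Q/2}\phi_n = e^{Q/2}\psi_n$.
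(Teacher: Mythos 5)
Your proof is correct, but the forward direction $(i)\to(ii)$ takes a genuinely different route from the paper's. The paper simply invokes the classical characterization recalled in Section \ref{sec1}: a Riesz basis is $\phi_n=Re_n$ with $R$ bounded and boundedly invertible, so the polar decomposition $R=e^{Q/2}U$ hands you a \emph{bounded} self-adjoint $Q$ for free, and the only remaining work is to substitute $\phi_n=e^{Q/2}e_n$ into \eqref{KK1} and read off $aI\leq e^{Q}\leq bI$, hence $\sigma(Q)\subset[\ln a,\ln b]$. You instead obtain the GRS structure internally, from Corollary \ref{KKK2} via the Friedrichs-extension machinery of Theorem \ref{new31}, where $Q$ is a priori unbounded, and then upgrade: the upper frame bound gives $\|e^{Q/2}f\|^2\leq b\|f\|^2$ on the dense domain $\mathcal{D}(e^{Q/2})$, and since $e^{Q/2}$ is self-adjoint (hence closed) this forces $\mathcal{D}(e^{Q/2})=\cH$ and boundedness; the lower bound then yields bounded invertibility. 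Both arguments are sound. The paper's is shorter and leans on standard Riesz-basis theory; yours is self-contained within the GRS framework and has the extra payoff of showing that the \emph{specific} pair $(Q,\{e_n\})$ produced by the Friedrichs extension is automatically the bounded one. Two minor streamlinings are available to you: once $e^{Q/2}$ is known to be bounded, everywhere defined and satisfies $\|e^{Q/2}f\|\geq a^{1/2}\|f\|$ on all of $\cH$, bounded invertibility follows directly from self-adjointness (the spectrum consists of approximate eigenvalues), so the detour through the dual Riesz basis $\{\psi_n\}$ with reciprocal bounds, while correct, is not needed; and your $(ii)\to(i)$ direction is exactly the computation the paper dismisses as ``obvious.''
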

\begin{proof} 
If $\{\phi_n\}$ is a Riesz basis, then $\phi_n=e^{Q/2}e_n$, where $Q$ is a bounded operator (see Section \ref{sec1}).
The substitution of $\phi_n=e^{Q/2}e_n$ into \eqref{KK1}  gives
$$
a(f,f)={a}\|f\|^2\leq\sum_{n}|(e^{Q/2}f, e_n)|^2\leq\|e^{Q/2}f\|^2=(e^Qf, f)\leq{b}\|f\|^2=b(f,f).
$$
Therefore, $aI\leq{e^Q}\leq{bI}$ that justifies $(i)\to(ii)$. The converse statement is obvious.
\end{proof}

\begin{lelele}\label{AAA11}
Let $Q$ be a self-adjoint operator such that $\sigma(Q)\subset(-\infty, \ln{b}]$ and let $\{e_n\}$ be an arbitrary 
ONB of $\cH$. Then the sequence $\{\phi_n=e^{Q/2}e_n\}$ is a GRS with the pair $(Q, \{e_n\})$.
\end{lelele}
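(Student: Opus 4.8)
The plan is to verify directly the two requirements of Definition \ref{d1} for the prescribed pair $(Q,\{e_n\})$: that each $e_n$ lies in $\mathcal{D}(e^{Q/2})\cap\mathcal{D}(e^{-Q/2})$, and that $\phi_n=e^{Q/2}e_n$. The latter is the defining relation, so the entire content is the domain membership. The mechanism that must replace the transparent structure available when $\{e_n\}$ diagonalizes $Q$ (in which case both domains are immediate) is the \emph{one-sided boundedness} forced by the spectral hypothesis; this is precisely what should let the argument run for a genuinely arbitrary ONB rather than only for vectors adapted to $Q$.

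First I would record the structural consequence of $\sigma(Q)\subset(-\infty,\ln b]$. By the functional calculus the function $\lambda\mapsto e^{\lambda/2}$ is bounded by $e^{(\ln b)/2}=\sqrt{b}$ on $\sigma(Q)$, so $e^{Q/2}$ is a bounded, positive, injective self-adjoint operator with $\|e^{Q/2}\|\le\sqrt{b}$ and $\sigma(e^{Q/2})\subset(0,\sqrt{b}]$. In particular $\mathcal{D}(e^{Q/2})=\cH$, so the first domain condition holds automatically for \emph{every} vector of the arbitrary ONB and $\phi_n=e^{Q/2}e_n$ is unambiguously defined. Two by-products are immediate and worth keeping for the finishing step: since $e^{Q/2}$ has dense range and $\mathrm{span}\{e_n\}$ is dense, $\{\phi_n\}$ is complete in $\cH$; and since
\[
\sum_n|(f,\phi_n)|^2=\sum_n|(e^{Q/2}f,e_n)|^2=\|e^{Q/2}f\|^2\le b\|f\|^2,\qquad f\in\cH,
\]
the sequence $\{\phi_n\}$ is a Bessel sequence with bound $b$.

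The remaining requirement, $e_n\in\mathcal{D}(e^{-Q/2})$, is where I expect the genuine difficulty to lie, and it is the main obstacle. Because $\mathcal{D}(e^{-Q/2})=\mathcal{R}(e^{Q/2})$ and, when $\sigma(Q)$ is unbounded below, $e^{-Q/2}$ is unbounded and $\mathcal{R}(e^{Q/2})$ is only a \emph{proper} dense subspace of $\cH$, this membership cannot be read off from the boundedness of $e^{Q/2}$. Writing $E$ for the spectral resolution of $Q$, membership of $e_n$ is exactly the finiteness of
\[
\|e^{-Q/2}e_n\|^2=\int_{(-\infty,\ln b]}e^{-\lambda}\,d(E_\lambda e_n,e_n),
\]
so the task reduces to controlling this integral for each $n$. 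Once it is finite the vectors $\psi_n:=e^{-Q/2}e_n$ lie in $\cH$ and the computation $(\phi_m,\psi_n)=(e^{Q/2}e_m,e^{-Q/2}e_n)=(e_m,e_n)=\delta_{mn}$ shows that $\{\psi_n\}$ is biorthogonal to $\{\phi_n\}$, giving minimality.

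To conclude I would feed these facts into the intrinsic criterion rather than reconstruct the pair by hand. With $\{\phi_n\}$ complete (Step 1) and minimal (Step 3) it is semi-regular, so the positive operator $S$ of \eqref{new14} is densely defined; here $S$ is the restriction of the positive self-adjoint operator $e^{-Q}$ to $\mathrm{span}\{\phi_n\}$, since $e^{-Q}\phi_n=e^{-Q/2}e_n=\psi_n$. Thus $A_F\ge e^{-Q}$, whence $A_F$ is positive by Lemma \ref{AAA2}, and Theorem \ref{new31} gives that $\{\phi_n\}$ is a GRS. That the associated pair is the prescribed $(Q,\{e_n\})$ follows because $e^{-Q}$ is a positive \emph{extremal} extension of $S$: by Lemma \ref{AAA3} this amounts to $\mathrm{span}\{\phi_n\}$ being dense in $\cH_{-Q}$, which by the computation \eqref{neww4} is equivalent to completeness of the ONB $\{e_n\}$ in $\cH$; hence $e^{-Q}$ may replace $A_F$ in the construction of Theorem \ref{new31}, returning precisely $e_n=e^{-Q/2}\phi_n$ and the given $Q$. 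The single non-automatic point, to be isolated and argued with care, is the spectral-integral finiteness of Step 3: for a truly arbitrary ONB this is the crux on which the one-sided spectral bound must be shown to suffice.
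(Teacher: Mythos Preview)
You correctly isolate the only substantive point --- showing $e_n\in\mathcal{D}(e^{-Q/2})$ for an \emph{arbitrary} ONB --- but you never prove it. You reduce it to the finiteness of $\int_{(-\infty,\ln b]} e^{-\lambda}\,d(E_\lambda e_n,e_n)$ and then defer, calling this ``the crux on which the one-sided spectral bound must be shown to suffice.'' That deferred step \emph{is} the entire lemma; everything else is routine, and your final paragraph routing through Theorem~\ref{new31} and extremality is superfluous, since once $e_n\in\mathcal{D}(e^{Q/2})\cap\mathcal{D}(e^{-Q/2})$ holds Definition~\ref{d1} is satisfied verbatim with the prescribed pair.

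The paper's route is different: it never touches the spectral integral, but after proving completeness simply \emph{asserts} that $\{\phi_n\}$ is semi-regular, takes the biorthogonal $\{\psi_n\}$, and from $(e_m,e^{Q/2}\psi_n)=(\phi_m,\psi_n)=\delta_{mn}$ reads off $e^{Q/2}\psi_n=e_n$, whence $e_n\in\mathcal{R}(e^{Q/2})=\mathcal{D}(e^{-Q/2})$. Note, however, that for bounded positive injective $e^{Q/2}$, minimality of $\{e^{Q/2}e_n\}$ is \emph{equivalent} to $e_n\in\mathcal{R}(e^{Q/2})$ for all $n$ (each direction goes through the biorthogonal sequence), so the paper's minimality assertion is precisely the point at issue and is not a consequence of the completeness argument preceding it. In fact the one-sided spectral bound does \emph{not} suffice for a truly arbitrary ONB, so your deferred step cannot be completed: take $Q$ diagonal in an ONB $\{f_k\}$ with $Qf_k=(-2\ln k)f_k$ (so $\sigma(Q)\subset(-\infty,0]$ and $e^{Q/2}f_k=k^{-1}f_k$), set $e_1=c\sum_{k\ge 1} k^{-1}f_k$ and extend to an ONB $\{e_n\}$; then $\|e^{-Q/2}e_1\|^2=c^2\sum_{k\ge 1}1=\infty$, so $e_1\notin\mathcal{D}(e^{-Q/2})$, $\{\phi_n\}$ is not minimal, and $(Q,\{e_n\})$ fails Definition~\ref{d1}. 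Your instinct that this was the genuine obstacle was correct.
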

\begin{proof}
Since $\sigma(Q)\subset(-\infty, \ln{b}]$, the self-adjoint operator $e^{Q/2}$ is  bounded.
Hence, the elements $\phi_n=e^{Q/2}e_n$ are well-defined.  If $\gamma$ is orthogonal to $\{\phi_n\}$, then
$0=(\gamma, e^{Q/2}e_n)=(e^{Q/2}\gamma, e_n)$ implies that  $e^{Q/2}\gamma=\gamma=0$. Therefore
 $\{\phi_n\}$ is semi-regular and  its bi-orthogonal sequence $\{\psi_n\}$ is defined uniquely.
According to Definition \ref{d1}, it is  sufficient to show that $e_n\in\mathcal{D}(e^{-Q/2})$ and $\psi_n=e^{-Q/2}e_n$.
By virtue of the relation
 $\delta_{mn}=(\phi_m, \psi_n)=(e^{Q/2}e_m, \psi_n)=(e_m, e^{Q/2}\psi_n)$ we obtain that
 $e^{Q/2}\psi_n=e_n$ . The last relation means that  $e_n\in\mathcal{D}(e^{-Q/2})$ and $\psi_n=e^{-Q/2}e_n$.
\end{proof}

A  sequence  $\{\phi_n\}$ is called \emph{a Bessel sequence} if there exists $b>0$ such that
\begin{equation}\label{e2}
\sum_{n}|(f, \phi_n)|^2\leq{b}\|f\|^2 \quad  \mbox{for all} \quad  f\in\mathcal{H}.
\end{equation}

\begin{ttt}\label{KKK4}
 The following are equivalent:
\begin{enumerate}
\item[(i)]  a semi-regular sequence $\{\phi_n\}$ is a Bessel sequence; 
\item[(ii)] $\{\phi_n\}$ is a GRS, i.e.,  $\phi_n=e^{Q/2}e_n$, where $Q$ is a self-adjoint operator such that $\sigma(Q)\subset(-\infty, \ln{b}]$.
\end{enumerate}
\end{ttt}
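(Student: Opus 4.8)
The plan is to prove the two implications separately: $(ii)\Rightarrow(i)$ is a short computation resting on Lemma~\ref{AAA11}, while $(i)\Rightarrow(ii)$ is the substantive direction and I will route it through Theorem~\ref{new31}. For $(ii)\Rightarrow(i)$, assume $\phi_n=e^{Q/2}e_n$ with $Q$ self-adjoint, $\sigma(Q)\subset(-\infty,\ln b]$, and $\{e_n\}$ an ONB. By Lemma~\ref{AAA11} the sequence $\{\phi_n\}$ is a semi-regular GRS with the pair $(Q,\{e_n\})$; the spectral condition makes $e^{Q/2}$ bounded with $\|e^{Q/2}\|^{2}=\|e^{Q}\|\le b$, so for every $f\in\cH$, using Parseval for $\{e_n\}$,
\[
\sum_{n}|(f,\phi_n)|^{2}=\sum_{n}|(e^{Q/2}f,e_n)|^{2}=\|e^{Q/2}f\|^{2}\le b\|f\|^{2},
\]
which is exactly \eqref{e2}.

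For $(i)\Rightarrow(ii)$, the first step is to convert the Bessel bound into a uniform lower bound for $S$. For any finite combination $f=\sum_{m=1}^{k}c_m\phi_m\in\mathcal{D}(S)$, the Cauchy--Schwarz inequality and \eqref{e2} give
\[
\|f\|^{2}=\Bigl|\sum_{m=1}^{k}c_m(\phi_m,f)\Bigr|\le\Bigl(\sum_{m=1}^{k}|c_m|^{2}\Bigr)^{1/2}\Bigl(\sum_{m=1}^{k}|(f,\phi_m)|^{2}\Bigr)^{1/2}\le\sqrt{b}\,\|f\|\Bigl(\sum_{m=1}^{k}|c_m|^{2}\Bigr)^{1/2},
\]
and since $(Sf,f)=\sum_{m=1}^{k}|c_m|^{2}$ (the identity recalled just before Theorem~\ref{new31}) we obtain $(Sf,f)\ge\frac1b\|f\|^{2}$ on $\mathcal{D}(S)$. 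Passing to the closure, $(\overline{S}f,f)\ge\frac1b\|f\|^{2}$ for all $f\in\mathcal{D}(\overline{S})$; in particular $\overline{S}$ is positive, and if $\lim_{n}(\overline{S}f_n,f_n)=0$ and $\lim_{n}f_n=g$, then $\|f_n\|^{2}\le b(\overline{S}f_n,f_n)\to0$, so $g=0$. Thus condition (iii) of Theorem~\ref{new31} is satisfied, whence $\{\phi_n\}$ is a GRS; moreover, by the proof of that theorem we may take $Q$ to be the self-adjoint operator determined by $e^{-Q}=A_F$, the Friedrichs extension of $S$.

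It remains to locate the spectrum of $Q$. With $\phi_n=e^{Q/2}e_n$ and $\{e_n\}$ an ONB, Parseval together with \eqref{e2} yields $\|e^{Q/2}f\|^{2}=\sum_{n}|(f,\phi_n)|^{2}\le b\|f\|^{2}$ for every $f\in\mathcal{D}(e^{Q/2})$. Since $e^{Q/2}$ is self-adjoint, hence closed and densely defined, and it is bounded on its dense domain, it must be everywhere defined and bounded with $\|e^{Q/2}\|\le\sqrt{b}$, so $\sigma(e^{Q/2})\subset(0,\sqrt{b}]$; by the functional calculus this forces $\sigma(Q)\subset(-\infty,\ln b]$, finishing the proof. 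The only points that require a little care — and the nearest thing here to a genuine obstacle — are that $Q$ is a priori unbounded, so "bounded on the dense domain $\Rightarrow$ bounded everywhere" for $e^{Q/2}$ must be derived from closedness rather than assumed, and that one must use the specific $Q$ attached to $A_F$ (a different representation of $\{\phi_n\}$ as a GRS could a priori violate the spectral bound, but statement (ii) only claims existence of one suitable representation).
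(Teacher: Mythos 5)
Your proof is correct, but the substantive direction $(i)\Rightarrow(ii)$ follows a genuinely different route from the paper's. The paper does not pass through Theorem~\ref{new31} at all: it takes the synthesis operator $R\{c_n\}=\sum c_n\phi_n$ (bounded by the Bessel condition, injective because a minimal sequence is $\omega$-independent), identifies $\ell^2(\mathbb{N})$ with $\cH$, and reads the pair $(Q,\{e_n\})$ off the polar decomposition $R=|R^*|U$, setting $e^{Q/2}=|R^*|$ and $e_n=U\tilde e_n$ (unitarity of $U$ and injectivity of $|R^*|$ coming from $\ker R=\{0\}$ and completeness of $\{\phi_n\}$); the spectral bound is then obtained by substituting $\phi_n=e^{Q/2}e_n$ into \eqref{e2}, exactly as you do in your last step. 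You instead convert the Bessel bound into the coercivity estimate $(\overline S f,f)\ge b^{-1}\|f\|^2$, which hands you condition (iii) of Theorem~\ref{new31} at once and lets you take $e^{-Q}=A_F$. Both arguments are sound; the paper's is constructive and independent of Theorem~\ref{new31}, while yours is shorter given that theorem and isolates the pleasant fact that Besselness of a semi-regular sequence is precisely a uniform lower bound for $S$. In fact you could have shortened your own ending: from $(\overline S f,f)\ge b^{-1}\|f\|^2$ the Friedrichs extension inherits the same lower bound, so $A_F=e^{-Q}\ge b^{-1}I$ and hence $\sigma(Q)\subset(-\infty,\ln b]$ directly, making the final Parseval argument unnecessary. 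One cosmetic remark: when $Q$ is unbounded below, $0$ lies in $\sigma(e^{Q/2})$, so the inclusion should read $\sigma(e^{Q/2})\subset[0,\sqrt b]$ rather than $(0,\sqrt b]$; this does not affect the conclusion.
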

\begin{proof}
 $(i)\to(ii)$. If $\{\phi_n\}$ is a Bessel sequence, then the synthesis operator $R\{c_n\}=\sum{c_n\phi_n}$ defines a bounded operator which
 maps $l^2(\mathbb{N})$ into $\cH$ \cite[p. 190]{Heil}. The minimality of
 $\{\phi_n\}$ implies that $\{\phi_n\}$ is $\omega$-independent \cite[p. 156]{Heil}. 
The latter means that the series $\sum{c_n\phi_n}$ converges and equal $0$ only when $c_n=0$.
Therefore, $\ker{R}=\{0\}$. 
 
 Let $\{\delta_n\}$ be the canonical basis of $l^2(\mathbb{N})$.  Then 
 $R\delta_n=\phi_n$. Identifying  $\{\delta_n\}$ with an ONB $\{\tilde{e}_n\}$ of  $\cH$ we obtain a bounded operator\footnote{we keep the same notation $R$ for the operator in $\cH$.} 
 $R$ in $\cH$ such
 $R\tilde{e}_n=\phi_n$. The polar decomposition of $R$ is  $R=|R^*|U$, where $|R^*|=\sqrt{RR^*}$ and
 $U$ is an isometric operator mapping the closure of  ${\mathcal R}(\sqrt{R^*R})$ onto the closure of ${\mathcal R}(R)$ \cite[Chapter VI, Subsect. 2.7]{Kato}.
 We remark that ${\mathcal R}(\sqrt{R^*R})$ and ${\mathcal R}(R)$ are dense sets in $\cH$, 
 since, respectively,  $\ker\sqrt{R^*R}=\ker{R}=\{0\}$ and $\{\phi_n\}$ is a complete set in $\cH$. 
 Therefore, $U$ is a unitary operator in $\cH$. Moreover,
  $|R^*|$ is a positive bounded self-adjoint operator (since $\ker|R^*|=\ker{R}=\{0\}$). 
 
 The positivity of $|R^*|$ leads to the formula $|R^*|=e^{Q/2}$, where $Q$ is a self-adjoint operator in $\cH$. 
  Denote $e_n=U\tilde{e}_n$. Obviously, $\{e_n\}$ is an ONB of $\cH$ and
  $$
  \phi_n=R\tilde{e}_n=|R^*|U\tilde{e}_n=e^{Q/2}U\tilde{e}_n=e^{Q/2}e_n.
  $$
  After the substitution of  $\phi_n=e^{Q/2}e_n$ into \eqref{e2}:
 $$
  \sum_{n}|(f, e^{Q/2}e_n|^2=\sum_{n}|(e^{Q/2}f, e_n|^2=\|e^{Q/2}f\|^2=(e^{Q}f, f)\leq{b}(f, f). 
$$ 
 The obtained inequality leads to the conclusion that $\sigma(Q)\subset(-\infty, \ln{b}]$. 
 Applying  Lemma \ref{AAA11} we complete  the proof  of  $(i)\to(ii)$. 
 
$(ii)\to(i)$.  In view of Lemma \ref{AAA11},  $\{\phi_n\}$ is a semi-regular sequence. 
The operator $e^Q$ is bounded and $\|e^Q\|\leq{b}$  (since  $\sigma(Q)\subset(-\infty, \ln{b}]$). Hence,
$$
\sum_{n}|(f, \phi_n)|^2= \sum_{n}|(e^{Q/2}f, e_n|^2=\|e^{Q/2}f\|^2=(e^{Q}f, f)\leq{b}\|f\|^2  
$$
that completes the proof.
\end{proof}

A sequence $\{\phi_n\}$ is called \emph{bounded} if $0<a\leq\|\phi_n\|\leq{b}$ for all $n$.
A basis  $\{\phi_n\}$  is called  \emph{conditional} if its property of being basis depends on the permutation of elements $\phi_n$.
 
\begin{ppp}\label{ppp21}
Let $Q$ be a self-adjoint operator in $\cH$ such that $\sigma(Q)\subset(-\infty, \ln{b}]$.
 The following are equivalent:
\begin{enumerate}
\item[(i)]  there exists an ONB $\{e_n\}$ of $\cH$ such that the sequence $\{\phi_n=e^{Q/2}e_n\}$ is a conditional bounded basis; 
\item[(ii)] there exists $\beta<0$  such that each interval $[(n+1)\beta, n\beta]$  ($n=0,1,\ldots$) includes at least one point of essential spectrum 
of $Q$.
\end{enumerate}
\end{ppp}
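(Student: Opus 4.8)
The plan is to derive the statement from Olevskii's theorem \cite[Theorem~1]{Olew} after passing to the bounded positive operator $B:=e^{Q/2}$. Since $\sigma(Q)\subset(-\infty,\ln b]$, the operator $B$ is bounded with $\ker B=\{0\}$, and by the functional calculus $\sigma(B)=\overline{\{e^{\lambda/2}:\lambda\in\sigma(Q)\}}\subset(0,b^{1/2}]$; moreover the increasing homeomorphism $\lambda\mapsto e^{\lambda/2}$ of $\mathbb{R}$ onto $(0,\infty)$ preserves isolated spectral points together with their multiplicities, so it carries $\sigma_{\mathrm{ess}}(Q)$ onto $\sigma_{\mathrm{ess}}(B)\cap(0,\infty)$ and maps the interval $[(n+1)\beta,n\beta]$ onto $[\theta^{n+1},\theta^{n}]$, with $\theta:=e^{\beta/2}\in(0,1)$. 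Hence condition (ii) is \emph{equivalent} to the following statement about $B$: there is $\theta\in(0,1)$ such that every annulus $[\theta^{n+1},\theta^{n}]$, $n=0,1,2,\dots$, contains a point of $\sigma_{\mathrm{ess}}(B)$. Because these annuli accumulate at $0$, this in particular forces $0\in\sigma(B)$; I would also note that $0$ is never an eigenvalue of $B=e^{Q/2}$, so whenever $0\in\sigma(B)$ one automatically has $0\in\sigma_{\mathrm{ess}}(B)$.

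With this dictionary in hand I would quote Olevskii's theorem in the form: \emph{for a bounded non-negative self-adjoint operator $B$ with $\ker B=\{0\}$ there is an ONB $\{e_n\}$ of $\cH$ for which $\{Be_n\}$ is a bounded basis of $\cH$ if and only if either $B$ is boundedly invertible, or there exists $\theta\in(0,1)$ such that every annulus $[\theta^{n+1},\theta^{n}]$ meets $\sigma_{\mathrm{ess}}(B)$; in the latter case the basis is necessarily conditional.} Granting this, both implications follow at once. For $(i)\Rightarrow(ii)$: a conditional bounded basis is not a Riesz basis, so by Proposition~\ref{KKK5b} the operator $B=e^{Q/2}$ is not boundedly invertible, and Olevskii's criterion then delivers the geometric density of $\sigma_{\mathrm{ess}}(B)$, which is (ii) after the change of variables above. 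For $(ii)\Rightarrow(i)$: condition (ii) supplies the geometric density of $\sigma_{\mathrm{ess}}(B)$ and forces $0\in\sigma(B)$, so $B$ is not boundedly invertible and Olevskii's theorem produces an ONB $\{e_n\}$ for which $\{Be_n=e^{Q/2}e_n\}$ is a conditional bounded basis; since $\sigma(Q)\subset(-\infty,\ln b]$, Lemma~\ref{AAA11} moreover identifies this $\{\phi_n\}$ as a GRS with the pair $(Q,\{e_n\})$.

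The genuine content, and the step I expect to be the main obstacle, is the sufficiency half of Olevskii's theorem in its \emph{bounded} form: out of the spectral data of $B$ near $0$ one must fabricate an orthonormal basis $\{e_n\}$ whose image is at the same time bounded below in norm, a Schauder basis, and not unconditional. The construction cuts $(0,\|B\|]$ into the geometric annuli $\Delta_n=[\theta^{n+1},\theta^{n}]$ and, using that each $\Delta_n$ meets $\sigma_{\mathrm{ess}}(B)$, chooses an infinite orthonormal family inside the range of $E_B(\Delta_n)$; on finite blocks one then applies a Hadamard/Fourier-type unitary rotation for which the Dirichlet-kernel norms of the partial-sum operators grow logarithmically in the block size --- this logarithmic growth is what destroys unconditionality, while the balanced nature of the rotation makes every image vector of a block have the same norm, equal to the root-mean-square of the weights used in that block. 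Arranging that each block contains a fixed positive fraction of weights of order one (which is possible precisely because the finitely many annuli near scale $1$ also meet $\sigma_{\mathrm{ess}}(B)$) then pins the norms $\|\phi_n\|$ uniformly away from $0$, and uniform boundedness of the associated partial-sum projections yields the basis property. This picture also explains why $\sigma_{\mathrm{ess}}(Q)$ rather than $\sigma(Q)$ is the correct invariant in (ii): only a point of the essential spectrum is surrounded by an infinite-dimensional spectral subspace, so an isolated eigenvalue of finite multiplicity provides no usable room at its scale to host a block of arbitrarily large size, and must therefore be discarded.
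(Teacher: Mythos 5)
Your proposal follows the same route as the paper: apply Olevskii's theorem to the bounded positive operator $e^{Q/2}$ and translate the geometric condition on $\sigma_{\mathrm{ess}}(e^{Q/2})$ over the intervals $[q^{n+1},q^{n}]$ into condition (ii) via $Q=2\ln e^{Q/2}$ (i.e.\ $\beta=2\ln q$), using the spectral mapping for the essential spectrum. The paper's proof is exactly this two-line reduction; your additional sketch of the sufficiency half of Olevskii's theorem and the remark on bounded invertibility are consistent with, but not required by, the cited result.
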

\begin{proof} Applying \cite[Theorem 1]{Olew} to the positive bounded operator $e^{Q/2}$ and taking
into account properties of an essential spectrum \cite[Proposition 8.11]{Konrad}  we arrive at the conclusion that the item $(i)$  
is equivalent to the existence of $0<q<1$ such that the essential spectrum of $e^{Q/2}$ has a non-zero interaction with each
interval $[q^{n+1}, q^n]$. Since $Q=2\ln{e^{Q/2}}$, the later statement is equivalent to $(ii)$ with $\beta=2\ln{q}$. 
\end{proof}

Let $\cH=L_2(-\pi, \pi)$ and $Q$ is an  operator of multiplication by $\alpha\ln|x|$\  ($0<\alpha$) in $\cH$. 
Obviously, $Q$ is self-adjoint,  its spectrum coincides with $(-\infty, \ln{\pi^\alpha}]$ and it is essential.
By Proposition \ref{ppp21},  there exists an ONB $\{e_n\}$ of $L_2(-\pi, \pi)$ such that $\{\phi_n=e^{Q/2}e_n\}$ is a conditional bounded basis.
In view of the Babenko example \cite[Example 5.13]{Heil},  for $0<\alpha<\frac{1}{2}$, the corresponding ONB can be chosen as 
$\{e_n=\frac{1}{\sqrt{2\pi}}e^{inx}\}_{-\infty}^\infty$.

\subsection{$J$-orthonormal sequences  and GRS}
 Let $J$ be a bounded self-adjoint operator in a Hilbert space  $\cH$ such that $J^2=I$.   The Hilbert space
$\cH$ equipped with the indefinite inner product
$[\cdot, \cdot]:=(J\cdot, \cdot)$  is called a \emph{Krein space}. 

A sequence $\{\phi_n\}$ is called \emph{$J$-orthonormal} if  $|[\phi_n, \phi_m]|=\delta_{nm}$.  

Each  $J$-orthonormal sequence $\{\phi_n\}$ is minimal  since its  
bi-orthogonal one is determined as
 \begin{equation}\label{new4}
	\psi_n=[\phi_n,\phi_n]J\phi_n.
	\end{equation}
 In view of \eqref{new4}, the positive symmetric operator $S$ in \eqref{new14} acts as
 $S\phi_n=[\phi_n, \phi_n]J\phi_n$.  
\begin{ppp}
Let $\{\phi_n\}$ be a complete $J$-orthonormal sequence. Then
$\{\phi_n\}$ is a Bessel sequence if and only if $\{\phi_n\}$  is a Riesz basis.
\end{ppp}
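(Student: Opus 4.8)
The plan is to prove the only nontrivial implication: a complete $J$-orthonormal Bessel sequence $\{\phi_n\}$ is a Riesz basis. First I would note that since $\{\phi_n\}$ is complete and $J$-orthonormal, it is in particular semi-regular, so Theorem \ref{KKK4} applies: being Bessel, $\{\phi_n\}$ is a GRS with $\phi_n=e^{Q/2}e_n$ for some self-adjoint $Q$ with $\sigma(Q)\subset(-\infty,\ln b]$ and some ONB $\{e_n\}$. By Proposition \ref{KKK5b}, it then suffices to show that $Q$ is bounded below, i.e. $\sigma(Q)\subset[\ln a,\ln b]$ for some $a>0$; equivalently, that $e^{-Q/2}$ is a bounded operator. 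The dual GRS is $\psi_n=e^{-Q/2}e_n$, and from \eqref{new4} we have $\psi_n=[\phi_n,\phi_n]J\phi_n$, so $\|\psi_n\|=\|J\phi_n\|=\|\phi_n\|\le\|e^{Q/2}\|\,\|e_n\|\le\sqrt b$. Thus $\{\psi_n\}$ is a norm-bounded sequence.

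Next I would exploit the $J$-structure more carefully. Applying $J$ to $\psi_n=[\phi_n,\phi_n]J\phi_n$ and using $J^2=I$ gives $J\psi_n=[\phi_n,\phi_n]\phi_n$, so $\phi_n=[\phi_n,\phi_n]J\psi_n$ (since $[\phi_n,\phi_n]=\pm1$). This says the relation between $\{\phi_n\}$ and $\{\psi_n\}$ is symmetric under the unitary involution $J$: in fact $J e^{Q/2}e_n = [\phi_n,\phi_n] e^{-Q/2}e_n$. The key structural fact I want is that $\{\psi_n\}$ is itself a Bessel sequence. Indeed, for any $f\in\cH$,
\begin{equation}\label{PF1}
\sum_n |(f,\psi_n)|^2 = \sum_n |(f,[\phi_n,\phi_n]J\phi_n)|^2 = \sum_n |(Jf,\phi_n)|^2 \le b\|Jf\|^2 = b\|f\|^2,
\end{equation}
using that $\{\phi_n\}$ is Bessel and $J$ is unitary. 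So $\{\psi_n\}$ is Bessel with the same bound $b$.

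Now $\{\psi_n\}$ is the (unique) biorthogonal sequence of the complete minimal sequence $\{\phi_n\}$, and it is Bessel. I would then invoke the standard fact from frame theory that if a complete minimal sequence $\{\phi_n\}$ and its biorthogonal $\{\psi_n\}$ are both Bessel, then $\{\phi_n\}$ is a Riesz basis — more directly, since $\{\psi_n\}$ is Bessel, the synthesis operator $T\{c_n\}=\sum c_n\psi_n$ is bounded on $\ell^2$, and because $\{\psi_n\}$ is itself $\omega$-independent and complete (its completeness follows because $\phi_n=[\phi_n,\phi_n]J\psi_n$ and $\{\phi_n\}$ is complete, so $\{\psi_n\}=\{[\phi_n,\phi_n]J\phi_n\}$ spans a dense set as $J$ is unitary), the argument of Theorem \ref{KKK4} applied to $\{\psi_n\}$ shows $\{\psi_n\}$ is a GRS $\psi_n=e^{\widetilde Q/2}\widetilde e_n$ with $\widetilde Q$ bounded above. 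But uniqueness of the biorthogonal sequence forces $e^{\widetilde Q/2}\widetilde e_n = e^{-Q/2}e_n$, whence $e^{-Q/2}$ is bounded, i.e. $Q$ is bounded below. Combined with $\sigma(Q)\subset(-\infty,\ln b]$ this gives $\sigma(Q)\subset[\ln a,\ln b]$ for suitable $a>0$, and Proposition \ref{KKK5b} yields that $\{\phi_n\}$ is a Riesz basis.

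The main obstacle is making the last step rigorous without circularity: one must be sure that the biorthogonal of the GRS $\{\phi_n\}$ genuinely coincides, as a sequence, with $\psi_n=e^{-Q/2}e_n$ and simultaneously with the $J$-defined $[\phi_n,\phi_n]J\phi_n$, and that "$\{\psi_n\}$ Bessel" transfers back to boundedness of $e^{-Q/2}$. The clean way around this is to work directly with operators: from $J\phi_n=[\phi_n,\phi_n]\psi_n$ and the biorthogonality $(\phi_m,\psi_n)=\delta_{mn}$, one checks that $JR$ (where $R$ is the bounded synthesis-type operator with $R\tilde e_n=\phi_n$ from the proof of Theorem \ref{KKK4}) has dense range and that its adjoint relations force $R^{-1}$ to be bounded; equivalently $e^{Q/2}$ is boundedly invertible. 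I expect this operator bookkeeping — rather than any deep new idea — to be the delicate part, and I would present it via the polar decomposition already set up in Theorem \ref{KKK4} together with the estimate \eqref{PF1}.
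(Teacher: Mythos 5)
Your proposal follows essentially the same route as the paper: show that the biorthogonal sequence $\{\psi_n\}=\{[\phi_n,\phi_n]J\phi_n\}$ is Bessel by substituting $Jf$ into the Bessel bound, apply Theorem \ref{KKK4} to both sequences to sandwich $\sigma(Q)$ inside $[-\ln b,\ln b]$, and conclude with Proposition \ref{KKK5b}. The ``uniqueness of $Q$'' obstacle you worry about in the last step dissolves if, instead of re-running Theorem \ref{KKK4} for $\{\psi_n\}$ and matching the two representations, you compute directly $\sum_n|(f,\psi_n)|^2=\sum_n|(e^{-Q/2}f,e_n)|^2=\|e^{-Q/2}f\|^2\le b\|f\|^2$ for $f\in\mathcal{D}(e^{-Q/2})$, which together with the closedness of the self-adjoint operator $e^{-Q/2}$ gives its boundedness, i.e.\ $\sigma(Q)\subset[-\ln b,\infty)$.
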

\begin{proof}
 Let us assume that  $\{\phi_n\}$ 
is a Bessel sequence. 	 Then $\{\psi_n\}$  is also a Bessel sequence. 
Indeed, substituting $Jf$ instead of $f$  into 
\eqref{e2} and using \eqref{new4}, we obtain
$$
\sum_{n}|(Jf, \phi_n)|^2=\sum_{n}|(f, J\phi_n)|^2=\sum_{n}|(f, \psi_n)|^2\leq{b}\|Jf\|^2=b\|f\|^2.
$$
 
By Theorem \ref{KKK4},  $\{\phi_n\}$ is a GRS and  $\phi_n=e^{Q/2}e_n$, where $\sigma(Q)\subset(-\infty, \ln{b}]$.
Since $\{\psi_n\}$ is also a Bessel sequence, applying Theorem \ref{KKK4} again we obtain
 $\sigma(-Q)\subset(-\infty, \ln{b}]$ or $\sigma(Q)\subset[-\ln{b}, \infty)$.
Therefore,  $\sigma(Q)\subset[\ln{a}, \ln{b}]$,
 where $a=1/b$.  In view of Proposition \ref{KKK5b},  $\{\phi_n\}$ is a Riesz basis. The inverse statement is obvious.  
\end{proof}

If $\{\phi_n\}$ is complete in $\cH$, then $\{\psi_n\}$ in \eqref{new4} is complete too. 
Therefore, $\{\phi_n\}$  is regular and, by Corollary \ref{KKK2},  $\{\phi_n\}$ is a GRS. Thus,
\emph{each complete $J$-orthonormal sequence is a GRS}.
 
It follows from the proof of Corollary \ref{KKK2} that each extremal extension $A$ of $S$ is positve  
Therefore, the corresponding operator $Q=-\ln A$ in Definition \ref{d1} can be determined by every extremal extension $A$.
If  $Q$ is determined uniquely, then \cite[Theorem III.3]{Kuzhel}:
\begin{equation}\label{new34}
	JQ=-QJ.
\end{equation}
However, if  $Q$ is not determined uniquely,  not each $Q=-\ln{A}$ satisfies \eqref{new34}. 
 In particular, as follows from \cite{KKS},  the operator $Q$ that corresponds 
to the Friedrichs extension $A_F$ does not satisfy \eqref{new34}. Moreover, there exist
 complete $J$-orthonormal sequences for which no operators $Q$ satisfying \eqref{new34} can be found. 

We say that \emph{a complete $J$-orthonormal sequence $\{\phi_n\}$ is of the first type} if
there exists a self-adjoint operator $Q$ in Definition \ref{d1} such that  \eqref{new34} holds. 
 Otherwise, $\{\phi_n\}$ is of \emph{the second type}.

 $J$-orthonormal bases are examples of the first type sequences.
The next  statement was proved in \cite{Kuzhel}, where  the notation "quasi-bases"  was used for the first type sequences.  
\begin{ppp}\label{new2b} 
The following are equivalent:
\begin{enumerate}
\item[(i)]  a complete $J$-orthonormal sequence $\{\phi_n\}$ is of the first type; 
\item[(ii)] the sequence  $\{\phi_n\}$ is regular and the corresponding pair $(Q, \{e_n\})$ in Definition \ref{d1} can be chosen as follows:
$Q$ satisfies \eqref{new34} and $e_n$ are eigenfunctions of $J$, i.e., $Je_n=e_n$ or $Je_n=-e_n$. 
\end{enumerate}
\end{ppp}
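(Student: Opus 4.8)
The plan is to prove the two implications separately, using Theorem~\ref{new31} and Corollary~\ref{KKK2} together with the structure of the operator $S$ in \eqref{new14}, which for a $J$-orthonormal sequence acts as $S\phi_n=[\phi_n,\phi_n]J\phi_n$. Recall that a complete $J$-orthonormal sequence is automatically regular, hence a GRS, so the content is entirely about whether the pair $(Q,\{e_n\})$ can be chosen in the special form described in $(ii)$.

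For $(ii)\to(i)$: suppose $Q$ satisfies $JQ=-QJ$ and $Je_n=\pm e_n$. Then I would observe that $JQ=-QJ$ forces $Je^{Q/2}J=e^{-Q/2}$, equivalently $Je^{Q/2}=e^{-Q/2}J$ on the appropriate domain (this is the formal computation $Jf(Q)J=f(-Q)$ applied to $f(t)=e^{t/2}$, justified via the spectral theorem since $J$ anticommutes with $Q$). Hence $J\phi_n=Je^{Q/2}e_n=e^{-Q/2}Je_n=\pm e^{-Q/2}e_n=\pm\psi_n$. This shows $[\phi_n,\phi_n]=(J\phi_n,\phi_n)=\pm(\psi_n,\phi_n)=\pm 1$, and $[\phi_n,\phi_m]=\pm(\psi_n,\phi_m)=0$ for $n\neq m$, so $\{\phi_n\}$ is indeed $J$-orthonormal; since it is also complete (because $\{e_n\}$ is an ONB and $e^{Q/2}$ has dense range), it is a complete $J$-orthonormal sequence, and by definition of the first type it remains only to note that the very $Q$ we started with witnesses \eqref{new34}. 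So this direction is essentially bookkeeping once the anticommutation $\Rightarrow$ intertwining identity is in hand.

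For $(i)\to(ii)$: assume $\{\phi_n\}$ is of the first type, so there is a self-adjoint $Q$ with $\phi_n=e^{Q/2}e_n$ for some ONB $\{e_n\}$ and $JQ=-QJ$. Regularity is immediate since $\{\phi_n\}$ is complete $J$-orthonormal. As in the previous paragraph, $JQ=-QJ$ yields $Je^{Q/2}=e^{-Q/2}J$, hence $J\phi_n=e^{-Q/2}(Je_n)$. On the other hand the bi-orthogonal sequence is $\psi_n=[\phi_n,\phi_n]J\phi_n$ by \eqref{new4}, and also $\psi_n=e^{-Q/2}e_n$ because $\{\phi_n\}$ is a GRS with this $Q$. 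Comparing, $e^{-Q/2}e_n=\psi_n=[\phi_n,\phi_n]J\phi_n=[\phi_n,\phi_n]\,e^{-Q/2}(Je_n)$, and since $e^{-Q/2}$ is injective we get $Je_n=[\phi_n,\phi_n]^{-1}e_n=[\phi_n,\phi_n]e_n$. As $[\phi_n,\phi_n]=\pm1$, this is exactly $Je_n=\pm e_n$, i.e.\ the $e_n$ are eigenvectors of $J$, which is what $(ii)$ asks for with the same $Q$.

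The main subtlety—really the only nontrivial point—is the passage from the algebraic anticommutation relation $JQ=-QJ$ to the operator identity $Je^{Q/2}=e^{-Q/2}J$ in the unbounded setting: one must check domains and justify that $J$ maps $\mathcal D(e^{\pm Q/2})$ onto itself and conjugates the spectral projections of $Q$ into those of $-Q$. I expect to handle this by the standard argument that $J\mathcal D(Q)=\mathcal D(Q)$ and $JE_Q(\Delta)J=E_{-Q}(\Delta)=E_Q(-\Delta)$ for Borel sets $\Delta$, whence $Jg(Q)J=g(-Q)$ for every Borel function $g$, applied to $g(t)=e^{\pm t/2}$; this is where I would cite the spectral-calculus properties already invoked implicitly elsewhere in the paper (e.g.\ around \eqref{new1} and Lemma~\ref{AAA11}). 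Everything else reduces to the bi-orthogonality formula \eqref{new4}, injectivity of $e^{-Q/2}$, and completeness, all of which are available.
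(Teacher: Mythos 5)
Your argument is correct, and in fact it supplies something the paper does not: the paper states Proposition \ref{new2b} with only a pointer to \cite{Kuzhel} and gives no proof, so there is no in-text argument to compare against. Your route is the natural one. The direction $(ii)\to(i)$ is indeed essentially definitional (once a pair $(Q,\{e_n\})$ with $JQ=-QJ$ exists, the sequence is first type by definition; your re-derivation of $J$-orthonormality is redundant but harmless, and completeness is already contained in the hypothesis of regularity, so the aside about dense range of $e^{Q/2}$ is not needed and would not by itself suffice for an unbounded $e^{Q/2}$). The substantive direction $(i)\to(ii)$ is handled correctly: from $JQJ=-Q$ one gets $JE_Q(\Delta)J=E_Q(-\Delta)$ and hence $Je^{Q/2}J=e^{-Q/2}$ as an equality of self-adjoint operators including domains, so $Je_n\in\mathcal D(e^{-Q/2})$ and $J\phi_n=e^{-Q/2}(Je_n)$; comparing with the unique bi-orthogonal sequence $\psi_n=e^{-Q/2}e_n=[\phi_n,\phi_n]J\phi_n$ from \eqref{new4} (uniqueness holds because $\{\phi_n\}$ is complete) and using injectivity of $e^{-Q/2}$ gives $Je_n=[\phi_n,\phi_n]e_n=\pm e_n$. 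You correctly isolate the only delicate point, namely the passage from the anticommutation relation to the intertwining identity for the unbounded functional calculus, and your sketch of it via conjugation of spectral projections is the standard and sufficient justification. Regularity follows, as you say, from \eqref{new4} and the boundedness and invertibility of $J$.
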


In what follows, considering a first type sequence $\{\phi_n=e^{Q/2}e_n\}$, we assume that the pair $(Q, \{e_n\})$ satisfies conditions $(ii)$ of Proposition \ref{new2b}.
A detailed analysis of the first/second type sequences can be found in \cite{Kuzhel}.  We just mention that
a first type sequence $\{\psi_n=e^{Q/2}e_n\}$ generates a $\cC$-symmetry operator\footnote{the concept of $\cC$-symmetry 
is widely used in $\cP\cT$-symmetric quantum mechanics \cite{BGSZ, Bender}} $\cC=e^QJ$ with the
 same operator $Q$.  The latter allows one to construct the new Hilbert space  $\cH_{-Q}$
involving $\{\phi_n\}$ as ONB,  directly as the completion of $\mathcal{D}(\cC)$ with respect to ``$\cC\mathcal{PT}$-norm'': 
 $(\cdot, \cdot)_{-Q}=[\cC\cdot, \cdot]=(Je^QJ\cdot,\cdot)=(e^{-Q}\cdot, \cdot).$

For a second type sequence,  the inner product  $(\cdot,\cdot)_{-Q}$ defined by \eqref{new1} cannot be expressed via  $[\cdot, \cdot]$
 and one should apply much more efforts for the precise definition of $(\cdot,\cdot)_{-Q}$.

\section{Examples}\label{sec3}
\subsection{A semi-regular sequence which cannot be a GRS}\label{subsec3.1}
Let $\{\mathsf{e}_n\}_{n=0}^{\infty}$ be an ONB of $\cH$.
Denote 
$$
\phi_n=\frac{1}{n^\beta}\mathsf{e}_n+\frac{1}{n^\alpha}\mathsf{e}_0, \qquad \alpha, \beta\in\mathbb{R}, \quad n=1,2,\ldots
$$
The sequence  $\{\phi_n\}_{n=1}^\infty$  is minimal  since $\{\psi_n=n^{\beta}\mathsf{e}_n\}_{n=1}^\infty$
is bi-orthogonal to $\{\phi_n\}$.  It is easy to see that $\{\phi_n\}$ is complete in $\cH$ if and only if 
$\alpha-\beta\leq\frac{1}{2}$. The last relation determines admissible parameters $\alpha, \beta$ for which
$\{\phi_n\}$ is a semi-regular sequence. During the  subsection we suppose that this inequality holds.

In view of  \eqref{new14},  $S(\frac{1}{n^\beta}\mathsf{e}_n+\frac{1}{n^\alpha}\mathsf{e}_0)=n^{\beta}\mathsf{e}_n$ and the operator 
$S$ can be described as:
\begin{equation}\label{AAA1b} 
Sf=\sum_{n=1}^k{n^{2\beta}}c_n\mathsf{e}_n  \quad \mbox{for all} \quad f=\sum_{n=1}^k{c_n}\mathsf{e}_n+\left(\sum_{n=1}^k\frac{n^{\beta}c_n}{n^{\alpha}}\right)\mathsf{e}_0 \ \in \mathcal{D}(S).
\end{equation}
It follows from \eqref{AAA1b} that the non-negative self-adjoint operator
\begin{equation}\label{AAA2b} 
Af=A\left(\sum_{n=0}^\infty{c}_n\mathsf{e}_n\right)=\sum_{n=1}^\infty{n^{2\beta}}c_n\mathsf{e}_n  
\end{equation}
with the domain 
$\mathcal{D}(A)=\left\{ f=\sum_{n=0}^\infty{c_n}\mathsf{e}_n  :   \{c_n\}_{n=1}^\infty,  \{n^{2\beta}c_n\}_{n=1}^\infty\in{\ell_2(\mathbb{N})}\right\}$
is an extension of $S$.

Assume that $\beta\leq{0}$. Then the semi-regular sequence $\{\phi_n\}$ cannot be a GRS.
Indeed, in this case, the operator $A$ is bounded. Therefore, $A$  coincides with the closure $\overline{S}$ of $S$.
In view of \eqref{AAA2b},  $\overline{S}\mathsf{e}_0=A\mathsf{e}_0=0$. By Theorem \ref{new31}, $\{\phi_n\}$ cannot be a GRS. 

Assume now that $\beta>0$. Then  $A$ is an unbounded non-negative
self-adjoint extension of $S$. Hence, $A$ is an extension of $\overline{S}$.  Using \eqref{AAA1b} and
\eqref{AAA2b}, we obtain
\begin{eqnarray*}
\mathcal{D}(\overline{S})&=&\Bigg\{ f=\sum_{n=0}^\infty{c_n}\mathsf{e}_n  :   \{c_n\}_{n=1}^\infty , \ \{n^{2\beta}c_n\}_{n=1}^\infty\in{\ell_2(\mathbb{N})} \\ 
&\mbox{and }& \sum_{n=1}^\infty\frac{n^{\beta}c_n}{n^{\alpha}} \ \mbox{converges and} \ c_0=\sum_{n=1}^\infty\frac{n^{\beta}c_n}{n^{\alpha}} \Bigg\}.
\end{eqnarray*}
Since $(\overline{S}f,f)=\sum_{n=1}^\infty{n^{2\beta}}|c_n|^2$, the operator $\overline{S}$ is positive. 
Using item (iii) of Theorem \ref{new31} we show that the semi-regular sequence $\{\phi_n\}$ is a GRS
for $\alpha>\frac{1}{2}$. To that end, it suffices  to verify the implication \eqref{KKK1}.

Let ${f_m=\sum_{n=0}^\infty{c_n^m}\mathsf{e}_n}$ be a sequence of elements $f_m\in\mathcal{D}(\overline{S})$ satisfying
\eqref{KKK1}. Then 
$$
\lim_{m\to\infty}(\overline{S}f_m, f_m)=\lim_{m\to\infty}\sum_{n=1}^\infty{n^{2\beta}}|c_n^m|^2=\|\{n^{\beta}c_n^m\}\|^2_{\ell_2(\mathbb{N})}=0
$$
and, since $\{{1}/{n^{\alpha}}\}\in\ell_2(\mathbb{N})$ for $\alpha>\frac{1}{2}$,  
$$
g=\lim_{m\to\infty}f_m=\lim_{m\to\infty}\sum_{n=1}^\infty{c_n^m}\mathsf{e}_n+ \lim_{m\to\infty}\sum_{n=1}^\infty\frac{n^{\beta}c_n^m}{n^{\alpha}}\mathsf{e}_0=
\lim_{m\to\infty}(\{n^{\beta}c_n^m\}, \{1/{n^{\alpha}}\})_{\ell_2(\mathbb{N})}\mathsf{e}_0=0
$$
that justifies the  implication \eqref{KKK1}.

\subsection{$J$-orthonormal sequences of the first/second type}\label{sec3.2}
Let a sequence of real numbers $\{\alpha_{k}\}_{k=0}^{\infty}$ satisfy the conditions
\begin{equation}\label{AGH88}
0\leq\alpha_{0}<\alpha_{1}<\alpha_{2}\ldots, \qquad  \lim_{k\to\infty}\alpha_k=\infty
\end{equation}
and let  $\{\mathsf{e}_n\}_{n=0}^{\infty}$  be an ONB of $\cH$ such that  $J\mathsf{e}_n=(-1)^n\mathsf{e}_n$.
 
 Each pair of orthonormal vectors $\{\mathsf{e}_{2k}, \mathsf{e}_{2k+1}\}_{k=0}^\infty$ can be identified with $\mathbb{C}^2$ assuming that
 \begin{equation}\label{AGH44}
U\mathsf{e}_{2k}=\left[\begin{array}{c}
 1 \\
 0 \end{array}\right], \qquad  U\mathsf{e}_{2k+1}=\left[\begin{array}{c}
 0 \\
 1 \end{array}\right].
 \end{equation}
 
 The operator $U$ is an isometric mapping of the space $\cH_k=\mbox{span}\{\mathsf{e}_{2k}, \mathsf{e}_{2k+1}\}$ onto
  $\mathbb{C}^2$ and $UJ=\sigma_3{U}$, where $\sigma_3=\left[\begin{array}{cc}
  1 & 0 \\
  0 & -1 
  \end{array}\right]$.  Since $\cH=\sum_{k=0}^\infty\oplus\cH_k$, the operator $U$ can be extended to 
  the isometric mapping of $\cH$ onto the Hilbert space $\mathbb{H}$ of infinitely many copies of $\mathbb{C}^2$:
   $\mathbb{H}=\sum_{k=0}^\infty\oplus\mathbb{C}^2$.  In the space $\mathbb{H}$, we define
    self-adjoint operators
   \begin{equation}\label{AGH51}
 \mathbb{Q}=2\sum_{k=0}^\infty\oplus\alpha_k\sigma_1, \quad e^{\mathbb{-Q}}=\sum_{k=0}^\infty\oplus{e^{-2\alpha_k\sigma_1}} \quad e^{\mathbb{Q}/2}=\sum_{k=0}^\infty\oplus{e^{\alpha_k\sigma_1}}, \quad 
 \mathbb{J}=\sum_{k=0}^\infty\oplus\sigma_3
\end{equation}
 where $\sigma_1=\left[\begin{array}{cc}
  0 & 1 \\
  1 & 0 
  \end{array}\right].$  Theirs unitary equivalent copies in $\cH$ are:  
  \begin{equation}\label{AGH52}
  Q=U^{-1}\mathbb{Q}U,  \quad e^{-Q}=U^{-1}e^{-\mathbb{Q}}U, \quad e^{Q/2}=U^{-1}e^{\mathbb{Q}/2}U, \quad J=U^{-1}{\mathbb{J}}U.
  \end{equation}
By the construction, $Q$ anticommutes with $J$:  $JQ=-QJ$. 

 Consider vectors $\{\phi_n\}_{n=0}^\infty$ defined by the formulas:
\begin{eqnarray}\label{AGH101}
\phi_{2k}=\cosh\alpha_k\mathsf{e}_{2k}+\sinh\alpha_k{\mathsf{e}_{2k+1}},  \quad k=0,1,\dots \nonumber \vspace{3mm} \\ 
\phi_{2k+1}=\frac{c_k}{\sqrt{\mu_{2k+1}}}\sum_{n=0}^{\infty}\frac{\chi_n\cosh\alpha_n}{1-\mu_{2k+1}\cosh^2\alpha_n}(\cosh\alpha_n\mathsf{e}_{2n+1}+\sinh\alpha_n\mathsf{e}_{2n}),
\end{eqnarray}
where $\{\chi_n\}_{n=0}^\infty$ is a vector from ${\ell_2(\mathbb{N})}$ such that $\chi_n\not=0$; the set
of numbers $0<\mu_{1}<\mu_{3}<\mu_{5}\ldots<1$ are roots of the equation 
\begin{equation}\label{AGH12}
\sum_{n=0}^{\infty}\frac{|\chi_n\cosh\alpha_n|^2}{1-\mu\cosh^2\alpha_n}=0
\end{equation} 
and 
\begin{equation}\label{AGH125}
c_k=\left({\sum_{n=0}^{\infty}\frac{|\chi_n|^2\cosh^4\alpha_n}{(1-\mu_{2k+1}\cosh^2\alpha_n)^2}}\right)^{-\frac{1}{2}}, \quad  k=0,1\ldots
\end{equation}

\begin{ttt}\label{AGH48}
Let the sequences $\{\alpha_{n}\}$ and $\{\chi_{n}\}$ satisfy the conditions above and let the sequence
$\{\chi_n\cosh^2\alpha_n\}$ do not belong to ${\ell_2(\mathbb{N})}$.
Then the vectors $\phi_n$ determined by \eqref{AGH101} form  a complete $J$-orthonormal sequence $\{\phi_n\}_{n=0}^\infty$ of the 
first type if
$\{\chi_n\cosh\alpha_n\}\not\in{\ell_2(\mathbb{N})}$ and of the
second type if
$\{\chi_n\cosh\alpha_n\}\in{\ell_2(\mathbb{N})}$. 

For the first type sequence $\{\phi_n\}$  the formula $\phi_n=e^{Q/2}e_n$ holds where $Q$ and $e^{Q/2}$ are determined by \eqref{AGH52} and an
ONB $\{e_n\}$ has the form
\begin{equation}\label{END1} 
  e_{2k}=\mathsf{e}_{2k}, \qquad e_{2k+1}=
  \frac{c_k}{\sqrt{\mu_{2k+1}}}\sum_{n=0}^{\infty}\frac{\chi_n\cosh\alpha_n}{1-\mu_{2k+1}\cosh^2\alpha_n}\mathsf{e}_{2n+1}.
 \end{equation} 

For the second type sequence such a choice of $Q$ and  $\{e_n\}$ is impossible
because the orthonormal system \eqref{END1} is not dense in $\cH$. 
A suitable operator $Q$ can be chosen as $Q=-\ln{A_F}$, where 
$A_F$ is the Friedrichs extension of the symmetric operator $S$ acting as $S\phi_n=(-1)^nJ\phi_n$ on vectors $\phi_n$ and extended 
onto $\mathcal{D}(S)=\mbox{span}\{\phi_n\}$ by the linearity. 
\end{ttt}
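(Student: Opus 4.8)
The proof has three distinct jobs: (1) verify that the $\phi_n$ in \eqref{AGH101} are $J$-orthonormal and complete; (2) in the non-square-summable case $\{\chi_n\cosh\alpha_n\}\notin\ell_2$, exhibit the pair $(Q,\{e_n\})$ of \eqref{AGH52}, \eqref{END1} and check that it realizes the GRS structure with $JQ=-QJ$, so that $\{\phi_n\}$ is of the first type by Proposition \ref{new2b}; (3) in the square-summable case, show the system \eqref{END1} fails to be complete, so no first-type pair exists, and identify the correct $Q$ as $-\ln A_F$.

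First I would handle the $J$-orthonormality and completeness. Write $\phi_{2k}=e^{\alpha_k\sigma_1}\mathsf{e}_{2k}$ in the block $\cH_k$ (using $e^{\alpha_k\sigma_1}\binom{1}{0}=\binom{\cosh\alpha_k}{\sinh\alpha_k}$), so these are the ``easy'' vectors; a direct computation with $[\cdot,\cdot]=(J\cdot,\cdot)$ and $J\mathsf{e}_n=(-1)^n\mathsf{e}_n$ gives $[\phi_{2k},\phi_{2j}]=\delta_{kj}$. For the odd vectors $\phi_{2k+1}$, the factor $c_k/\sqrt{\mu_{2k+1}}$ and the defining equations \eqref{AGH12}, \eqref{AGH125} are manufactured precisely so that $[\phi_{2k+1},\phi_{2j+1}]=\delta_{kj}$ and $[\phi_{2k},\phi_{2j+1}]=0$; the first of these is where \eqref{AGH12} enters (it kills the cross terms $k\neq j$ after a partial-fractions manipulation of $\frac{1}{(1-\mu\cosh^2\alpha_n)(1-\nu\cosh^2\alpha_n)}$), and \eqref{AGH125} normalizes the diagonal. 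Completeness: any $\gamma\perp\{\phi_n\}$ must be $\perp\mathsf{e}_{2k}$ modulo the even/odd split, and orthogonality to all $\phi_{2k+1}$ forces the odd-part Fourier coefficients of $\gamma$ to solve a homogeneous system whose only solution is $0$ precisely when $\{\mu_{2k+1}\}$ exhausts the relevant set of roots of \eqref{AGH12}; here the hypothesis $\{\chi_n\cosh^2\alpha_n\}\notin\ell_2$ is used to guarantee that \eqref{AGH12} has infinitely many roots filling the gaps between the $\cosh^{-2}\alpha_n$, so no residual direction survives. Since a complete $J$-orthonormal sequence is a GRS (it is regular, Corollary \ref{KKK2}), in both cases $\{\phi_n\}$ is a GRS; the only question is the type.

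Next, the first-type case $\{\chi_n\cosh\alpha_n\}\notin\ell_2$. Define $e_n$ by \eqref{END1}; one checks $\|e_{2k+1}\|^2=\frac{c_k^2}{\mu_{2k+1}}\sum_n\frac{|\chi_n\cosh\alpha_n|^2}{(1-\mu_{2k+1}\cosh^2\alpha_n)^2}$ and, using \eqref{AGH12} to rewrite $\sum_n\frac{|\chi_n\cosh\alpha_n|^2}{(1-\mu\cosh^2\alpha_n)^2}=\frac{1}{\mu}\sum_n\frac{|\chi_n|^2\cosh^4\alpha_n}{(1-\mu\cosh^2\alpha_n)^2}$ (differentiate \eqref{AGH12} in $\mu$, or use the identity $\frac{1}{(1-\mu t)^2}=\frac{1}{\mu}\frac{d}{d\mu}\frac{1}{1-\mu t}\cdot\frac{1}{t}$ combined with the vanishing sum), one gets $\|e_{2k+1}\|=1$; orthogonality of $\{e_{2k+1}\}$ among themselves again reduces to \eqref{AGH12} via partial fractions, and $e_{2k}=\mathsf{e}_{2k}\perp e_{2j+1}$ trivially. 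Completeness of $\{e_n\}$ in $\cH$: the even part is all of $\overline{\mathrm{span}}\{\mathsf{e}_{2k}\}$; for the odd part, $\gamma\perp\{e_{2k+1}\}$ gives $\sum_n\overline{\chi_n\cosh\alpha_n}\,\frac{\overline{d_n}}{1-\mu_{2k+1}\cosh^2\alpha_n}=0$ for all $k$ (with $d_n$ the odd coefficients of $\gamma$), and the divergence $\{\chi_n\cosh\alpha_n\}\notin\ell_2$ is exactly what forces $d_n\equiv 0$ — this is the crux of the whole theorem and the step I expect to be the main obstacle, since it is where ``first type'' and ``second type'' are separated. Granting this, $e^{Q/2}e_n=\phi_n$ follows blockwise: in $\cH_k$, $e^{\alpha_k\sigma_1}e_{2k}=e^{\alpha_k\sigma_1}\mathsf{e}_{2k}=\phi_{2k}$, and for the odd index, $e^{Q/2}$ acts on $\mathsf{e}_{2n+1}$ by mixing it with $\mathsf{e}_{2n}$ via $\cosh\alpha_n,\sinh\alpha_n$, which turns \eqref{END1} into \eqref{AGH101}. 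Since $Je_n=\pm e_n$ (the $e_{2k}$ are $J$-eigenvectors with eigenvalue $+1$, the $e_{2k+1}$ with eigenvalue $-1$, as each lies in the odd span) and $JQ=-QJ$ by construction in \eqref{AGH52}, Proposition \ref{new2b} gives that $\{\phi_n\}$ is of the first type.

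Finally, the second-type case $\{\chi_n\cosh\alpha_n\}\in\ell_2$. Now the same formal vectors \eqref{END1} are still orthonormal, but the completeness argument breaks: the vector with odd coefficients $d_n=\overline{\chi_n\cosh\alpha_n}$ (which lies in $\cH$ precisely because of square-summability) is orthogonal to every $e_{2k+1}$ by virtue of \eqref{AGH12} — so $\{e_n\}$ misses at least one direction and is not an ONB. To see that no other first-type pair can exist, invoke Proposition \ref{new2b}(ii): a first-type representation would require an ONB $\{e_n\}$ of $J$-eigenvectors with $JQ=-QJ$, which pins down $e^{Q/2}$ on the odd span (up to the freedom parametrized by the extremal extensions), and one shows via the uniqueness discussion of Subsection \ref{subsec2.3} that the operator $S\phi_n=(-1)^nJ\phi_n$ here has a unique positive extremal extension, namely $A_F$, whose associated $Q$ fails \eqref{new34} — this last fact is quoted from \cite{KKS}. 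Hence the best one can do is take $Q=-\ln A_F$; that $A_F$ is positive (so $Q$ is well-defined self-adjoint) follows from Theorem \ref{new31} since $\{\phi_n\}$, being complete $J$-orthonormal, is a GRS. This completes the dichotomy.
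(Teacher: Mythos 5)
Your architecture matches the paper's (establish $J$-orthonormality and completeness of $\{\phi_n\}$; exhibit the pair \eqref{AGH52}, \eqref{END1} and apply Proposition \ref{new2b} in the first case; show \eqref{END1} is not an ONB in the second case), and you correctly locate the dichotomy in whether the vector with odd coefficients $\chi_n\cosh\alpha_n$ lies in $\ell_2(\mathbb{N})$. But two essential steps are missing or misidentified. First, completeness of $\{\phi_n\}$ in $\cH$: you say the hypothesis $\{\chi_n\cosh^2\alpha_n\}\notin\ell_2(\mathbb{N})$ is used ``to guarantee that \eqref{AGH12} has infinitely many roots filling the gaps''. That is not where it enters: \eqref{AGH12} has a root between any two consecutive poles $1/\cosh^2\alpha_{n+1}<1/\cosh^2\alpha_n$ automatically (sign change of a convergent series), for any $\chi\in\ell_2$ with $\chi_n\neq0$. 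The actual mechanism (the paper's Lemmas \ref{AGH19} and \ref{AGH20}) is geometric: $\overline{\mbox{span}}\{\phi_{2k}\}=\mathfrak L_+=(I+T)\cH_+$, the vectors $\phi_{2k+1}$ arise as $\mu_{2k+1}^{-1/2}(I+T)\gamma_{2k+1}$ where $\{\gamma_{2k+1}\}$ is the eigenbasis of the compact operator $P_{M_-}(I-T^2)P_{M_-}$ on $M_-=\{g\in\cH_-:(g,\chi)=0\}$, so $\overline{\mbox{span}}\{\phi_{2k+1}\}=\mathfrak L_-^0=(I+T)M_-$; and the only obstruction to density of $\mathfrak L_+\dot+\mathfrak L_-^0$ is the vector with odd coefficients $\chi_k\cosh^2\alpha_k$, which lies in $\cH$ exactly when that sequence is square-summable. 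Your direct computation (``$\gamma\perp\{\phi_n\}$ must be $\perp\mathsf{e}_{2k}$ modulo the even/odd split'') does not go through, since every $\phi_{2k}$ mixes even and odd basis vectors. The same eigenbasis fact is what you need for the step you explicitly ``grant'': orthogonality of $\gamma$ to all $e_{2k+1}$ is, via $e_{2k+1}=\mu_{2k+1}^{-1/2}(I-T^2)^{1/2}\gamma_{2k+1}$, equivalent to $(I-T^2)^{1/2}\gamma\in\cH_-\ominus M_-=\mathbb{C}\chi$, i.e.\ $d_n=c\,\chi_n\cosh\alpha_n$; without knowing that $\{\gamma_{2k+1}\}$ is an ONB of $M_-$, your ``homogeneous system'' has no handle. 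So the crux of the theorem is asserted, not proved.

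Second, your argument that no first-type pair exists when $\{\chi_n\cosh\alpha_n\}\in\ell_2(\mathbb{N})$ is not sound. Showing that the particular system \eqref{END1} is incomplete rules out only one candidate pair; you must exclude every pair $(Q',\{e_n'\})$ with $JQ'=-Q'J$. Your route --- that $S$ has a unique positive extremal extension $A_F$ whose associated $Q$ fails \eqref{new34} --- is neither justified nor consistent with the first-type case (the same reasoning would forbid first-type behaviour there as well). The paper's argument is a rigidity statement: any admissible $Q'$ anticommuting with $J$ produces a $\cC$-symmetry $\cC=Je^{-Q'}$ satisfying $\cC\phi_n=(-1)^n\phi_n$, hence its maximal positive/negative subspaces coincide with $\mathfrak L_\pm$, which are determined by $\{\phi_n\}$ alone; by the uniqueness of the operator attached to such a pair of subspaces (\cite[Theorem 6.2.3]{BGSZ}) one gets $Q'=Q$ of \eqref{AGH52}, forcing $e_n'=e^{-Q/2}\phi_n$ to be \eqref{END1}, which is not an ONB. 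Some version of this uniqueness argument is indispensable and is absent from your proposal.
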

The proof of Theorem \ref{AGH48} is given in Section \ref{sec6}.

Let us consider a particular case assuming that
$$
\tanh\alpha_n=\sqrt{\frac{n}{n+1}}, \qquad  \chi_n=\frac{1}{(n+1)^{\frac{\delta+1}{2}}}, \quad  n\geq{0},
$$ 
and $0<\delta\leq{2}$ (the condition $0<\delta$ guarantees that $\{\chi_n\}\in{\ell_2(\mathbb{N})}$ while $\delta\leq{2}$ ensures that
 $\{\chi_n\cosh^2\alpha_n\}\not\in{\ell_2(\mathbb{N})}$).  
Then the root equation \eqref{AGH12} takes the form
\begin{equation}\label{eqsum}
\sum_{n=1}^{\infty}\frac{1}{n^\delta}\cdot\frac{1}{1-n\mu}=0,
\end{equation}
$c_k=\left({\sum_{n=1}^{\infty}\frac{n^{1-\delta}}{(1-\mu_{2k+1}n)^2}}\right)^{-\frac{1}{2}}$,
and the sequence $\{\phi_n\}_{n=0}^\infty$:
\begin{eqnarray*}
 \phi_{2k}=\sqrt{k+1}\mathsf{e}_{2k}+\sqrt{k}{\mathsf{e}_{2k+1}},  \quad k=0, 1, 2\dots  \vspace{3mm} \\ 
\phi_{2k+1}=\frac{c_k}{\sqrt{\mu_{2k+1}}}\sum_{n=1}^{\infty}\frac{1}{n^{\delta/2}}\cdot\frac{1}{1-n\mu_{2k+1}}(\sqrt{n}\mathsf{e}_{2n-1}+\sqrt{n-1}\mathsf{e}_{2n-2}), 
\end{eqnarray*}
turns out to be the  first kind if $0<\delta\leq{1}$  and the second kind if $1<\delta\leq{2}$.

Figure $1$ contains a numerical localization of the first 5 roots for of (\ref{eqsum}).
\begin{figure}
\centering
\includegraphics[width=15cm]{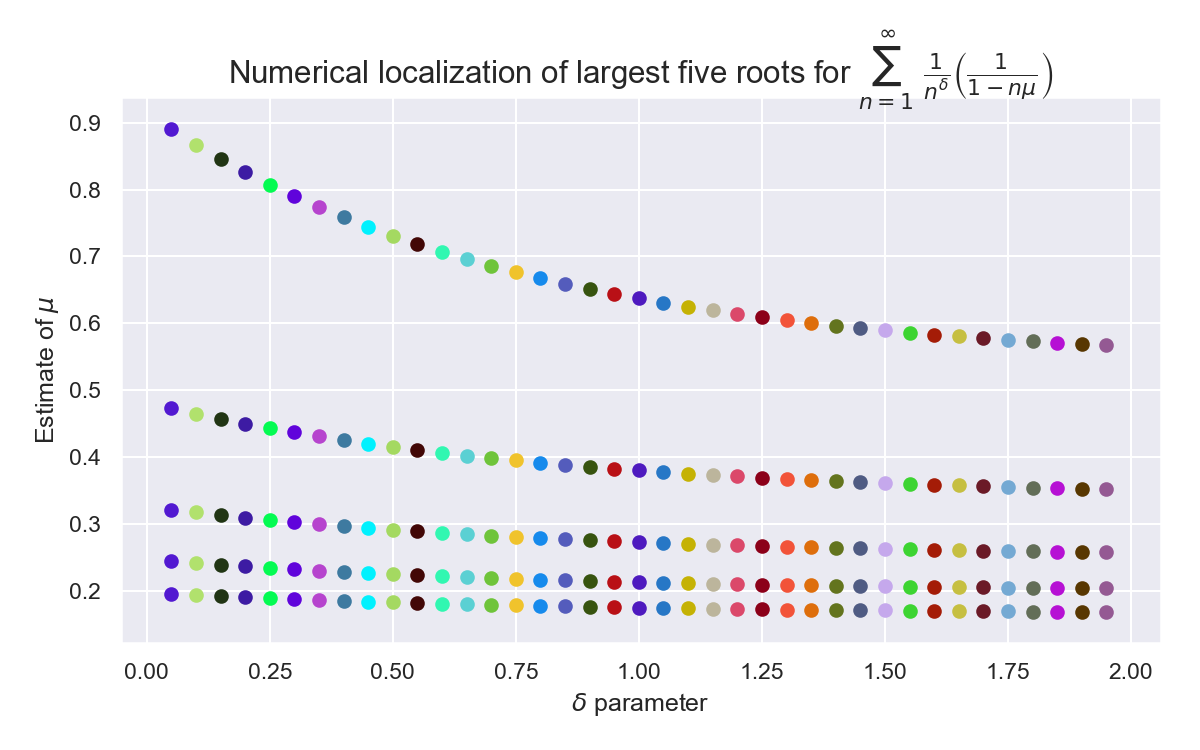}
\caption{First five roots to the equation (\ref{eqsum}) for different
δ parameter value.}
\end{figure}

\section{Appendix: the proof of Theorem \ref{AGH48}}\label{sec6}
We refer \cite[Subsection III.1]{Kuzhel} and \cite{KKS} for results of the Krein space theory which are necessary for our exposition.
\subsection{Preliminaries}
An operator $T$ defined by the formula
\begin{equation}\label{AGH67b}
    Te_{2k}=\tanh\alpha_k\mathsf{e}_{2k+1}, \qquad  Te_{2k+1}=\tanh\alpha_k\mathsf{e}_{2k}, \qquad k=0,1,2\dots
  \end{equation}  
  on the ONB $\{\mathsf{e}_n\}_{n=0}^\infty$ and extended onto $\cH$ by the linearity is a self-adjoint strong contraction 
  (`strong' means that  $\|Tf\|<\|f\|$ for non-zero $f$). Moreover 
\begin{equation}\label{AAA4}
JT=-TJ,
\end{equation}
 since $J\mathsf{e}_n=(-1)^n\mathsf{e}_n$. 
The properties of $T$ allow one to define $J$-orthogonal maximal positive $\mathfrak L_+$ and maximal negative $\mathfrak L_-$ subspaces 
of the Krein space $(\cH, [\cdot,\cdot])$ \cite[Lemma 2.2]{KKS}:
\begin{equation}\label{AGH36}
\mathfrak L_+=(I+T)\cH_+,  \qquad
\mathfrak L_-=(I+T)\cH_-,
\end{equation}
where $\cH_+$ and $\cH_-$ are the closure (in $\cH$) of $\mbox{span}\{\mathsf{e}_{2k}\}_{k=0}^\infty$ and $\mbox{span}\{\mathsf{e}_{2k+1}\}_{k=0}^\infty$, respectively.

Consider a vector $\chi=\sum_{k=0}^\infty\chi_k\mathsf{e}_{2k+1}$, where $\{\chi_k\}$ belongs to ${\ell_2(\mathbb{N})}$.
By the construction, $\chi\in\cH_{-}$ and 
\begin{equation}\label{AGH8}
\mathfrak L_-^0=\{(I+T)g ,\ g\in M_- \},  \quad \mbox{where} \quad M_-=\{g\in \cH_-,\ (g, \chi)=0 \}
\end{equation}
is a subspace of the maximal negative space $\mathfrak L_-$ defined by \eqref{AGH36}.

The $J$-orthogonal sum $\mathfrak L_+{\dot+}\mathfrak L_-$
is dense in $\cH$ since $\mathfrak L_\pm$ are maximal subspaces. However,  we can not state that the set
$\mathfrak L_+{\dot+}\mathfrak L_-^0$ remains  dense  in $\cH$ since $\mathfrak L_-^0$ is a proper subspace of $\mathfrak L_-$.

\begin{lelele}\label{AGH19}
The sum $\mathfrak L_+{\dot+}\mathfrak L_-^0$ is dense in $\cH$ if and only if  $\left\{\chi_n\cosh^2\alpha_n\right\}\not\in\ell_2(\mathbb{N})$.
\end{lelele}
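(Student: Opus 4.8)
The plan is to reduce density of $\mathfrak L_+{\dot+}\mathfrak L_-^0$ to the triviality of its orthogonal complement in $\cH$, and then to compute that complement explicitly. Using the orthogonal decomposition $\cH=\cH_+\oplus\cH_-$ I would write a candidate orthogonal vector as $h=h_++h_-$. The two facts I would exploit are that $T$ is self-adjoint and that, by \eqref{AGH67b}, $T$ maps $\cH_+$ into $\cH_-$ and $\cH_-$ into $\cH_+$. Consequently $h\perp\mathfrak L_+=(I+T)\cH_+$ means $\big((I+T)h,f\big)=0$ for every $f\in\cH_+$, i.e. the $\cH_+$-component of $(I+T)h$ vanishes; since that component is $h_++Th_-$, this is the relation $h_+=-Th_-$.

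Next I would treat $\mathfrak L_-^0=(I+T)M_-$ with $M_-=\{g\in\cH_-:(g,\chi)=0\}$. Here the key observation is that $M_-$ is a hyperplane in $\cH_-$ whose orthogonal complement inside $\cH_-$ is spanned by $\chi$; hence $h\perp\mathfrak L_-^0$ is equivalent to saying that the $\cH_-$-component of $(I+T)h$, namely $h_-+Th_+$, lies in $\mathbb{C}\chi$, i.e. $h_-+Th_+=\lambda\chi$ for some scalar $\lambda$. Substituting $h_+=-Th_-$ eliminates $h_+$ and leaves the single equation $(I-T^2)h_-=\lambda\chi$ inside $\cH_-$, with $h_+$ recovered afterwards from $h_-$.

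The last step is a direct computation: from \eqref{AGH67b}, $T^2\mathsf{e}_{2k+1}=\tanh^2\alpha_k\,\mathsf{e}_{2k+1}$, so $I-T^2$ acts on $\cH_-$ as multiplication of $\mathsf{e}_{2k+1}$ by $1-\tanh^2\alpha_k=\cosh^{-2}\alpha_k$. Writing $\chi=\sum_k\chi_k\mathsf{e}_{2k+1}$, the equation $(I-T^2)h_-=\lambda\chi$ forces $h_-=\lambda\sum_k\chi_k\cosh^2\alpha_k\,\mathsf{e}_{2k+1}$, and for this to define a vector of $\cH_-$ one needs $\lambda\{\chi_k\cosh^2\alpha_k\}\in\ell_2(\mathbb{N})$. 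Thus if $\{\chi_n\cosh^2\alpha_n\}\notin\ell_2(\mathbb{N})$ only $\lambda=0$ is admissible, whence $h_-=0$, then $h_+=-Th_-=0$, so the complement is $\{0\}$ and the sum is dense; conversely, if $\{\chi_n\cosh^2\alpha_n\}\in\ell_2(\mathbb{N})$ then (since $\chi\neq0$) the choice $\lambda=1$ produces a nonzero $h=h_-+h_+\in\cH$ orthogonal to $\mathfrak L_+{\dot+}\mathfrak L_-^0$, and the sum fails to be dense. I expect the only point requiring care to be the correct identification of $(\mathfrak L_-^0)^{\perp}$: one must use that $M_-$ has codimension one in $\cH_-$ (not that $\mathfrak L_-^0$ is a hyperplane in all of $\cH$), so the orthogonality condition becomes a one-parameter family $h_-+Th_+\in\mathbb{C}\chi$ rather than an exact identity; the remaining manipulations involve only the bounded operators $T$ and $I-T^2$ and present no difficulty.
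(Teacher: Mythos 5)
Your proof is correct and follows essentially the same route as the paper: both reduce orthogonality to $\mathfrak L_+{\dot+}\mathfrak L_-^0$ to the equation $(I-T^2)f=\lambda\chi$ on $\cH_-$ and then read off $f_k=\lambda\chi_k\cosh^2\alpha_k$. The only (harmless) difference is that you obtain the constraint $h_+=-Th_-$ by a direct projection computation using the self-adjointness of $T$ and the fact that $T$ swaps $\cH_+$ and $\cH_-$, whereas the paper gets the same parametrization $h=-(I-T)f$, $f\in\cH_-$, by invoking that $\mathfrak L_-$ is the $J$-orthogonal complement of the maximal positive subspace $\mathfrak L_+$.
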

\begin{proof}
Assume that $h\in\cH$ is orthogonal to $\mathfrak L_+{\dot+}\mathfrak L_-^0$. 
Then $Jh$ is orthogonal to $\mathfrak L_+$ with respect to the indefinite inner product $[\cdot,\cdot]$. 
Since $\mathfrak L_-$ is the $J$-orthogonal complement of $\mathfrak L_+$,  the vector $Jh$ belongs to $\mathfrak L_-$. 
By \eqref{AGH36},  $Jh=(I+T)f$,  where $f\in\cH_-$ and
$h=J(I+T)f=(I-T)Jf=-(I-T)f.$  

By the assumption, $h$ is also orthogonal to $\mathfrak L_-^0$.  In view of  \eqref{AGH8} this means that
$0=(h, (I+T)g)=-((I-T)f, (I+T)g)=-((I-T^2)f, g)$ for all $g\in M_-$.
Therefore, without loss of generality we can assume that
$(I-T^2)f=\chi$.  Here, $f=\sum_{k=0}^{\infty}{f_k \mathsf{e}_{2k+1}}$ since $f\in\cH_-$.
In view of \eqref{AGH67b}, 
$$
(I-T^2)f=\sum_{k=0}^{\infty}{(1-\tanh^2\alpha_k)f_k\mathsf{e}_{2k+1}}=\sum_{k=0}^{\infty}\frac{f_k}{\cosh^2\alpha_k}\mathsf{e}_{2k+1}=\sum_{k=0}^{\infty}{\chi_k\mathsf{e}_{2k+1}}=\chi.
$$
Therefore, $f_k=\chi_k\cosh^2\alpha_k$ and we arrive at the conclusion that 
$\mathfrak L_+\dot+\mathfrak L_-^0$ is a non-dense set in $\cH$ if and only if 
 $\{\chi_k\cosh^2\alpha_k\}^\infty_{k=0}\in\ell_2(\mathbb N)$.  
\end{proof}

\subsection{Complete $J$-orthonormal sequence $\{\phi_n\}$}
\begin{lelele}\label{AGH20}
If $\{\chi_n\cosh^2\alpha_n\}\not\in{\ell_2(\mathbb{N})}$, then the vectors $\{\phi_n\}$ 
defined by \eqref{AGH101} form a complete $J$-orthonormal sequence in $\cH$.
\end{lelele}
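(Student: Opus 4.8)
The plan is to establish the two assertions --- that $\{\phi_n\}$ is $J$-orthonormal and that it is complete --- separately, in each case by pushing the vectors \eqref{AGH101} through $I+T$ and reading off their position relative to the subspaces $\mathfrak L_+$ and $\mathfrak L_-^0$ of \eqref{AGH36}, \eqref{AGH8}. The first move is the rewriting $\phi_{2k}=\cosh\alpha_k\,(I+T)\mathsf{e}_{2k}$ and $\phi_{2k+1}=\frac{c_k}{\sqrt{\mu_{2k+1}}}\,(I+T)\eta_k$, where $\eta_k:=\sum_{n\ge0}\frac{\chi_n\cosh^2\alpha_n}{1-\mu_{2k+1}\cosh^2\alpha_n}\mathsf{e}_{2n+1}$; this uses $(I+T)\mathsf{e}_{2n+1}=\mathsf{e}_{2n+1}+\tanh\alpha_n\mathsf{e}_{2n}$ from \eqref{AGH67b}, so that $\cosh\alpha_n\mathsf{e}_{2n+1}+\sinh\alpha_n\mathsf{e}_{2n}=\cosh\alpha_n(I+T)\mathsf{e}_{2n+1}$. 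One checks that $\eta_k\in\cH_-$: the coefficient sequence $n\mapsto\cosh^2\alpha_n/(1-\mu_{2k+1}\cosh^2\alpha_n)$ has no pole (each root $\mu_{2k+1}$ of \eqref{AGH12} lies strictly between two consecutive poles $1/\cosh^2\alpha_n$) and converges to $-1/\mu_{2k+1}$, hence is bounded, while $\{\chi_n\}\in\ell_2(\mathbb{N})$. Moreover $(\eta_k,\chi)=\sum_n\frac{|\chi_n\cosh\alpha_n|^2}{1-\mu_{2k+1}\cosh^2\alpha_n}=0$ by \eqref{AGH12}, so $\eta_k\in M_-$; thus $\phi_{2k}\in\mathfrak L_+=(I+T)\cH_+$ and $\phi_{2k+1}\in\mathfrak L_-^0=(I+T)M_-$.

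For the $J$-orthonormality I would use $J\mathsf{e}_n=(-1)^n\mathsf{e}_n$, which gives $[\mathsf{e}_{2n},\mathsf{e}_{2m}]=\delta_{nm}$ and $[\cosh\alpha_n\mathsf{e}_{2n+1}+\sinh\alpha_n\mathsf{e}_{2n},\cosh\alpha_m\mathsf{e}_{2m+1}+\sinh\alpha_m\mathsf{e}_{2m}]=-\delta_{nm}$. The relations $[\phi_{2j},\phi_{2k}]=\delta_{jk}$ and $[\phi_{2j},\phi_{2k+1}]=0$ then follow at once (in the second, the $\mathsf{e}_{2j}$- and $\mathsf{e}_{2j+1}$-contributions cancel). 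For the odd--odd pairings one is left with $-\frac{c_jc_k}{\sqrt{\mu_{2j+1}\mu_{2k+1}}}\sum_n\frac{|\chi_n|^2\cosh^2\alpha_n}{(1-\mu_{2j+1}\cosh^2\alpha_n)(1-\mu_{2k+1}\cosh^2\alpha_n)}$; for $j\ne k$ the partial-fraction identity $\frac{1}{(1-ax)(1-bx)}=\frac{1}{a-b}\bigl(\frac{a}{1-ax}-\frac{b}{1-bx}\bigr)$ together with \eqref{AGH12} makes the sum vanish, and for $j=k$ the identity $\frac{1}{\mu}\cdot\frac{x}{(1-\mu x)^2}-\frac{x^2}{(1-\mu x)^2}=\frac{1}{\mu}\cdot\frac{x}{1-\mu x}$, combined with \eqref{AGH12} and the choice \eqref{AGH125} of $c_k$, yields $[\phi_{2k+1},\phi_{2k+1}]=-1$. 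Hence $|[\phi_n,\phi_m]|=\delta_{nm}$, as required.

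The substantive step is completeness. Since $I+T$ is continuous, $\overline{\mathrm{span}}\{\phi_{2k}\}\supseteq(I+T)\cH_+=\mathfrak L_+$ (because $\{\mathsf{e}_{2k}\}$ is an ONB of $\cH_+$) and $\overline{\mathrm{span}}\{\phi_{2k+1}\}\supseteq(I+T)\,\overline{\mathrm{span}}\{\eta_k\}$, so everything reduces to showing $\overline{\mathrm{span}}\{\eta_k\}=M_-$. For this I would introduce the \emph{compact} self-adjoint operator $W^{-1}$ on $\cH_-$ with $W^{-1}\mathsf{e}_{2n+1}=\cosh^{-2}\alpha_n\mathsf{e}_{2n+1}$ --- compact because $\cosh^{-2}\alpha_n\to0$, of simple spectrum because the $\alpha_n$ are distinct, and with $\chi$ as a cyclic vector because $\chi_n\ne0$ --- and its compression $A_\infty:=P_{M_-}W^{-1}|_{M_-}$ to $M_-=\chi^\perp\cap\cH_-$, which is again compact and self-adjoint. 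From $(W^{-1}-\mu_{2k+1})\eta_k=\chi$ and $\eta_k\perp\chi$ one checks $A_\infty\eta_k=\mu_{2k+1}\eta_k$; conversely, an eigenvector of $A_\infty$ for an eigenvalue $\mu\ne0$ must have $\mu\notin\sigma(W^{-1})$, must be a scalar multiple of $(W^{-1}-\mu)^{-1}\chi$, and must satisfy $\bigl((W^{-1}-\mu)^{-1}\chi,\chi\bigr)=\sum_n\frac{|\chi_n\cosh\alpha_n|^2}{1-\mu\cosh^2\alpha_n}=0$, i.e. $\mu$ is one of the $\mu_{2k+1}$. Finally $\ker A_\infty=\{0\}$: if $f\in\cH_-$ and $W^{-1}f=c\chi$ then $f=c\{\chi_n\cosh^2\alpha_n\}$, which lies in $\ell_2(\mathbb{N})$ only for $c=0$, precisely because $\{\chi_n\cosh^2\alpha_n\}\notin\ell_2(\mathbb{N})$ --- this is the one place the hypothesis enters the completeness argument. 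The spectral theorem for compact self-adjoint operators then gives $M_-=\overline{\mathrm{span}}\{\eta_k\}$, so $\overline{\mathrm{span}}\{\phi_n\}$ is a closed subspace containing $\mathfrak L_+\dot+\mathfrak L_-^0$, which is dense in $\cH$ by Lemma \ref{AGH19} (again using $\{\chi_n\cosh^2\alpha_n\}\notin\ell_2(\mathbb{N})$); therefore $\overline{\mathrm{span}}\{\phi_n\}=\cH$.

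I expect the identity $\overline{\mathrm{span}}\{\eta_k\}=M_-$ to be the main obstacle. A frontal approach --- assume $h\in M_-$ is orthogonal to all $\eta_k$, deduce that $\mu\mapsto\sum_n\frac{h_n\overline{\chi_n}}{1-\mu\cosh^2\alpha_n}$ vanishes at every root of \eqref{AGH12}, and conclude it vanishes identically --- runs into the difficulty that the poles $1/\cosh^2\alpha_n$ accumulate at $0$, so controlling this meromorphic function near $0$ is awkward. Recasting the $\eta_k$ as eigenvectors of the compact operator $A_\infty$ avoids this and concentrates the whole argument into the single statement $\ker A_\infty=\{0\}$, mirroring exactly the role that $\{\chi_n\cosh^2\alpha_n\}\notin\ell_2(\mathbb{N})$ already plays in Lemma \ref{AGH19}.
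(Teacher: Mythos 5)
Your proposal is correct and follows essentially the same route as the paper's: both rest on the decomposition $\mathfrak L_+\dot+\mathfrak L_-^0$, on Lemma \ref{AGH19}, and on the spectral decomposition of the compact self-adjoint operator $P_{M_-}(I-T^2)P_{M_-}$ on $M_-$ (your $A_\infty$, since $(I-T^2)|_{\cH_-}$ is exactly multiplication by $\cosh^{-2}\alpha_n$). The only difference is organizational: the paper defines $\phi_{2k+1}$ abstractly from the eigenfunctions $\gamma_{2k+1}$ of that operator, reads off $J$-orthonormality and density in $\mathfrak L_-^0$ from the spectral theorem, and only afterwards identifies $\gamma_{2k+1}$ with the explicit vectors of \eqref{AGH101}, whereas you start from \eqref{AGH101}, check $J$-orthonormality by hand via partial fractions, and then recognize your $\eta_k$ as precisely those eigenvectors.
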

\begin{proof}
The vectors $\{\phi_{2k}\}_{k=0}^\infty$ in \eqref{AGH101} are $J$-orthonormal because
[$\phi_{2k}, \phi_{2k'}]=0$ for $k\not=k'$ and
$[\phi_{2k}, \phi_{2k}]=\cosh^2\alpha_k-\sinh^2\alpha_k=1.$
Moreover, in view of of \eqref{AGH67b},  the vectors $\{\phi_{2k}\}$ can be presented as 
 $\phi_{2k}=\cosh\alpha_k(I+T)\mathsf{e}_{2k}$. This relation
and \eqref{AGH36} imply that the closure of $\mbox{span}\{\phi_{2k}\}_{k=0}^{\infty}$ coincides with $\mathfrak L_+$.

By virtue of \eqref{AGH67b}, 
$$
(I-T^2)f=\sum_{k=0}^{\infty}{(1-\tanh^2\alpha_k)f_k\mathsf{e}_{2k+1}} \quad \mbox{for all} \quad f=\sum_{k=0}^{\infty}{f_k \mathsf{e}_{2k+1}}\in\cH_-.
$$
This relation yields that $I-T^2$ is a compact operator in $\cH_-$, since $\lim_{k\to\infty}(1-\tanh^2\alpha_k)=0$, 
see \cite[problem 132]{Halmoshbook}. Therefore,  $P_{M_-}(I-T^2)P_{M_-}$, where $P_{M_-}$ is an orthogonal projection in $\cH_-$ onto 
the subspace $M_-$ defined in  \eqref{AGH8},  is
a self-adjoint compact operator in  $M_-$.  This implies the existence of an ONB   $\{\gamma_{2k+1}\}$ of $M_-$ which is formed
by eigenfunctions of $P_{M_-}(I-T^2)P_{M_-}$.  Let $\{\mu_{2k+1}\}_{k=0}^\infty$ be the corresponding eigenvalues, i.e.,
$P_{M_-}(I-T^2)\gamma_{2k+1}=\mu_{2k+1}\gamma_{2k+1}$. Since $I-T^2$ is a positive contraction, we can state that
 $0<\mu_{2k+1}<{1}$ and $\lim_{k\to\infty}\mu_{2k+1}=0$.

Denote
\begin{equation}\label{AGH111}
\phi_{2k+1}=\frac{1}{\sqrt{\mu_{2k+1}}}(I+T)\gamma_{2k+1}, \qquad k=0,1\ldots
\end{equation}
The vectors $\{\phi_{2k+1}\}_{k=0}^\infty$ are $J$-orthonormal because
$$
[\phi_{2k+1}, \phi_{2k'+1}]=-\frac{((I-T^2)\gamma_{2k+1}, \gamma_{2k'+1})}{\sqrt{\mu_{2k+1}\mu_{2k'+1}}}
=-\sqrt{\frac{\mu_{2k+1}}{\mu_{2k'+1}}}(\gamma_{2k+1}, \gamma_{2k'+1})=-\delta_{kk'}.
$$
Moreover,  in view of \eqref{AGH8},  $\phi_{2k+1}\in{\mathfrak L}_-^0$ and the closure of $\mbox{span}\{\phi_{2k+1}\}_{k=0}^\infty$ 
coincides with ${\mathfrak L}_-^0$. Applying now  Lemma \ref{AGH19} we arrive at the conclusion that, 
for the case $\{\chi_n\cosh^2\alpha_n\}\not\in{\ell_2(\mathbb{N})}$,
the $J$-orthonormal sequence $\{\phi_n\}_{n=0}^\infty$, where $\{\phi_{2k}\}_{k=0}^\infty$ and $\{\phi_{2k+1}\}_{k=0}^\infty$ are defined by 
\eqref{AGH101} and \eqref{AGH111}, respectively, is complete in $\cH$.

To finish the proof of Lemma \ref{AGH20} it suffices to show that  the formulas
\eqref{AGH101} and \eqref{AGH111} determine the same 
vectors $\{\phi_{2k+1}\}_{k=0}^\infty$. To do that,  we describe 
the eigenvalues $\mu_{2k+1}$ and the normalized eigenfunctions $\gamma_{2k+1}$ of the equation
 \begin{equation}\label{AGH10}
P_{M_-}(I-T^2)g=\mu{g}, \qquad g\in{M_-}.
\end{equation}

In view of \eqref{AGH8},  the  condition $g\in{M_-}$  means that 
\begin{equation}\label{AGH11b}
\sum_{k=0}^{\infty}g_k\overline{\chi}_k=0, \quad \mbox{where} \quad g=\sum_{k=0}^{\infty}{g_k \mathsf{e}_{2k+1}}.
\end{equation}

Let $f=\sum_{k=0}^{\infty}{f_k \mathsf{e}_{2k+1}}$ be an arbitrary element of $\cH_-$. Then
\begin{equation}\label{AGH12b}
P_{M_-}f=\sum_{k=0}^{\infty}{(f_k-\alpha[f]\chi_k)\mathsf{e}_{2k+1}}, \qquad \alpha[f]=\frac{1}{\|\chi\|^2}\sum_{k=0}^{\infty}f_k\overline{\chi}_k.
\end{equation}
 
 Using \eqref{AGH12b} we rewrite  \eqref{AGH10} as
$$
 P_{M_-}\sum_{k=0}^{\infty}(1-\tanh^2\alpha_k)g_k\mathsf{e}_{2k+1}=\sum_{k=0}^{\infty}[(1-\tanh^2\alpha_k)g_k-\alpha[(I-T^2)g]\chi_k]\mathsf{e}_{2k+1}=\mu\sum_{k=0}^{\infty}{g_k \mathsf{e}_{2k+1}} 
$$
that implies
\begin{equation}\label{AGH62}
(1-\tanh^2\alpha_k-\mu)g_k=\alpha[(I-T^2)g]\chi_k, \qquad k=0,1,\ldots 
\end{equation}
It is important that $\alpha[(I-T^2)g]\not=0$ in \eqref{AGH62}. Indeed, if $\alpha[(I-T^2)g]=0$, then 
$(1-\tanh^2\alpha_k-\mu)g_k=0$ for all 
$k$. Due to conditions imposed on $\alpha_n$ in \eqref{AGH88}, there exists a unique $k'$ such that $
g_{k'}\not=0$ and $\mu=1-\tanh^2\alpha_{k'}$.  This means that $g=g_{k'}\mathsf{e}_{2k'+1}$ belongs to $M_-$. 
The last fact is impossible because $0=<g, \chi>=g_{k'}\overline{\chi}_{k'}\not=0$ (we recall that $\chi_n\not=0$ for all $n$ by the assumption). 
The obtained contradiction shows that  $\alpha[(I-T^2)g
]\not=0$. This means that $(1-\tanh^2\alpha_k-\mu)g_k\not=0$ and \eqref{AGH62} can be rewritten as
$$
g_k=\alpha[(I-T^2)g]\frac{\chi_k\cosh^2\alpha_k}{1-\mu\cosh^2\alpha_k},  \qquad k=0,1,2\dots
$$

The corresponding solution $g(\mu)=\sum_{k=0}^{\infty}g_k\mathsf{e}_{2k+1}$ of \eqref{AGH10} must be in $M_-$. 
By virtue of \eqref{AGH11b},  $g(\mu)$ belongs to $M_-$ if and only if $\mu$ is the root of \eqref{AGH12}.
The equation \eqref{AGH12} has infinitely many roots $0<\mu_{1}<\mu_{3}<\ldots<\mu_{2k+1}\ldots<1$ 
that coincide with eigenvalues of $P_{M_-}(I-T^2)P_{M_-}$. The eigenfunctions of $P_{M_-}(I-T^2)P_{M_-}$ corresponding to $\mu_{2k+1}$ have the form
$$
g(\mu_{2k+1})=\alpha[(I-T^2)g(\mu_{2k+1})]\sum_{n=0}^{\infty}\frac{\chi_n\cosh^2\alpha_n}{1-\mu_{2k+1}\cosh^2\alpha_n}\mathsf{e}_{2n+1}, \qquad k=0,1\ldots
$$
Then
$$
\gamma_{2k+1}=\frac{g(\mu_{2k+1})}{\|g(\mu_{2k+1})\|}={c_k}\sum_{n=0}^{\infty}\frac{\chi_n\cosh^2\alpha_n}{1-\mu_{2k+1}\cosh^2\alpha_n}\mathsf{e}_{2n+1}, \qquad k=0,1\ldots,
$$
where the normalizing factor $c_k$ is defined in \eqref{AGH125}. By the construction, $\{\gamma_{2k+1}\}_{k=0}^\infty$ is an ONB of $M_-$.
Substituting the obtained expression for $\gamma_{2k+1}$ into \eqref{AGH111} and taking \eqref{AGH67b} into account, we obtain
the vectors $\{\phi_{2k+1}\}$ from \eqref{AGH101}.
\end{proof}

\begin{lelele}\label{NEWWW}
If  $\{\chi_n\cosh\alpha_n\}\not\in\ell_2(\mathbb{N})$,
then the sequence $\{\phi_n\}$ is
of the first type. The corresponding operator $Q$ in Definition \ref{d1} is defined  by
 \eqref{AGH52} while ONB $\{e_n\}$ has the form \eqref{END1}.
\end{lelele}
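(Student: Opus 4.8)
The plan is to produce explicitly the pair $(Q,\{e_n\})$ named in the statement — $Q$ given by \eqref{AGH52} and $\{e_n\}$ by \eqref{END1} — and to verify that it exhibits $\{\phi_n\}$ as a GRS in the sense of Definition \ref{d1} with $JQ=-QJ$; in view of Lemma \ref{AGH20} this is exactly what is meant by saying that the complete $J$-orthonormal sequence $\{\phi_n\}$ is of the first type. First, Lemma \ref{AGH20} does apply here: since $\cosh\alpha_n\ge 1$, the condition $\{\chi_n\cosh\alpha_n\}\notin\ell_2(\mathbb N)$ forces $\{\chi_n\cosh^2\alpha_n\}\notin\ell_2(\mathbb N)$, so $\{\phi_n\}$ is complete and $J$-orthonormal. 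The anticommutation $JQ=-QJ$ is the one already recorded after \eqref{AGH52}, coming from $\sigma_3\sigma_1=-\sigma_1\sigma_3$ on each copy of $\mathbb C^2$. Thus the work reduces to two points: (a) $e_n\in\mathcal D(e^{Q/2})\cap\mathcal D(e^{-Q/2})$ and $\phi_n=e^{Q/2}e_n$; (b) $\{e_n\}$ is an ONB of $\cH$.

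For (a) I would argue along the decomposition $\cH=\sum_{k\ge0}\oplus\cH_k$, on which $e^{\pm Q/2}$ is block-diagonal with $k$-th block $e^{\pm\alpha_k\sigma_1}=\cosh\alpha_k\,I\pm\sinh\alpha_k\,\sigma_1$. Via the identification \eqref{AGH44} this gives $e^{Q/2}\mathsf{e}_{2k}=\cosh\alpha_k\mathsf{e}_{2k}+\sinh\alpha_k\mathsf{e}_{2k+1}$ and $e^{Q/2}\mathsf{e}_{2n+1}=\sinh\alpha_n\mathsf{e}_{2n}+\cosh\alpha_n\mathsf{e}_{2n+1}$ (and the analogues with $-\alpha_k$), so $e_{2k}=\mathsf{e}_{2k}$ lies trivially in $\mathcal D(e^{\pm Q/2})$ and $\phi_{2k}=e^{Q/2}e_{2k}$ is immediate from \eqref{AGH101}. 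For the odd vectors, the component of $e_{2k+1}$ in $\cH_n$ is $d_{k,n}\mathsf{e}_{2n+1}$ with $d_{k,n}=\frac{c_k}{\sqrt{\mu_{2k+1}}}\cdot\frac{\chi_n\cosh\alpha_n}{1-\mu_{2k+1}\cosh^2\alpha_n}$, and since $\|e^{\pm\alpha_n\sigma_1}\mathsf{e}_{2n+1}\|^2=\cosh 2\alpha_n\le 2\cosh^2\alpha_n$, the definition \eqref{AGH125} of $c_k$ yields $\sum_n\|e^{\pm Q/2}(P_{\cH_n}e_{2k+1})\|^2\le 2c_k^2\mu_{2k+1}^{-1}\sum_n\frac{|\chi_n|^2\cosh^4\alpha_n}{(1-\mu_{2k+1}\cosh^2\alpha_n)^2}=2/\mu_{2k+1}<\infty$. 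Hence $e_{2k+1}\in\mathcal D(e^{Q/2})\cap\mathcal D(e^{-Q/2})$, and since $e^{Q/2}$ is closed it may be applied term by term, which after comparison with \eqref{AGH101} gives $\phi_{2k+1}=e^{Q/2}e_{2k+1}$.

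For (b), $\{e_{2k}\}=\{\mathsf{e}_{2k}\}$ is an ONB of $\cH_+$ and $e_{2k+1}\in\cH_-$, so it remains to show that $\{e_{2k+1}\}_{k\ge0}$ is an ONB of $\cH_-$. Orthonormality follows from the root equation \eqref{AGH12}, $\sum_n\frac{|\chi_n\cosh\alpha_n|^2}{1-\mu_{2k+1}\cosh^2\alpha_n}=0$: substituting $b=\cosh^2\alpha_n$ into the elementary identities $\frac1{(1-tb)^2}=\frac1{1-tb}+\frac{tb}{(1-tb)^2}$ and $\frac1{(1-sb)(1-tb)}=\frac1{s-t}\bigl(\frac{s}{1-sb}-\frac{t}{1-tb}\bigr)$, one reads off $\|e_{2k+1}\|^2=1$ (the first identity together with \eqref{AGH125}) and $(e_{2k+1},e_{2j+1})=0$ for $k\ne j$ (the second identity together with the root equation at both $\mu_{2k+1}$ and $\mu_{2j+1}$). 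For completeness of $\{e_{2k+1}\}$ in $\cH_-$, let $B$ be the bounded injective self-adjoint operator on $\cH_-$ with $B\mathsf{e}_{2n+1}=(\cosh\alpha_n)^{-1}\mathsf{e}_{2n+1}$ — the restriction to $\cH_-$ of $(I-T^2)^{1/2}$, by \eqref{AGH67b}. Comparing \eqref{END1} with the explicit $\gamma_{2k+1}$ obtained in the proof of Lemma \ref{AGH20} gives $e_{2k+1}=\mu_{2k+1}^{-1/2}B\gamma_{2k+1}$; since $\{\gamma_{2k+1}\}$ is an ONB of $M_-=\{g\in\cH_-:(g,\chi)=0\}$ and $B$ is self-adjoint, a vector $h\in\cH_-$ is orthogonal to all $e_{2k+1}$ iff $Bh\perp M_-$, i.e. $Bh\in\operatorname{span}\{\chi\}$ (recall $M_-$ has codimension $1$ in $\cH_-$). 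But $\chi=\sum_n\chi_n\mathsf{e}_{2n+1}$ lies in $\mathcal R(B)$ exactly when $\sum_n|\chi_n\cosh\alpha_n|^2<\infty$, which fails by hypothesis; hence $Bh=0$, so $h=0$, and $\{e_{2k+1}\}$ is complete in $\cH_-$.

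Assembling (a) and (b) with $JQ=-QJ$: the pair $(Q,\{e_n\})$ writes $\{\phi_n\}$ in the form required by Definition \ref{d1} and satisfies \eqref{new34}, so the complete $J$-orthonormal sequence $\{\phi_n\}$ is of the first type; since moreover $Je_{2k}=e_{2k}$ and $Je_{2k+1}=-e_{2k+1}$, the pair also fulfils condition (ii) of Proposition \ref{new2b}. The only genuinely non-mechanical step is the completeness of $\{e_{2k+1}\}$ in $\cH_-$, and there the hypothesis $\{\chi_n\cosh\alpha_n\}\notin\ell_2(\mathbb N)$ is indispensable: it says precisely that $\chi\notin\mathcal R\bigl((I-T^2)^{1/2}\bigr)$, which is the ONB-level counterpart of the density criterion for $\mathfrak L_+\dot{+}\mathfrak L_-^0$ established in Lemma \ref{AGH19}.
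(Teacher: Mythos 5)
Your proof is correct, and it takes a noticeably more direct route than the paper's. The paper proceeds abstractly: it observes that $e^{-Q}$ is a positive self-adjoint extension of the operator $S$ from \eqref{new14}, sets $e_n:=e^{-Q/2}\phi_n$, and then invokes the machinery of Theorem \ref{new31} to reduce the ONB property of $\{e_n\}$ to the completeness of $\{\phi_n\}$ in the auxiliary Hilbert space $\cH_{-Q}=\widehat{\mathfrak L}_+\dot{+}\widehat{\mathfrak L}_-$; the latter is then settled by an approximation argument with sequences $f_n\in\mathfrak L_-$, ending in the same obstruction you found, namely whether $(I-T^2)^{1/2}f=\chi$ is solvable. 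You instead verify Definition \ref{d1} by hand entirely inside $\cH$: the domain membership $e_n\in\mathcal D(e^{\pm Q/2})$ via the explicit $\ell_2$ estimate built from \eqref{AGH125}, the identity $\phi_n=e^{Q/2}e_n$ by blockwise computation, orthonormality by partial fractions against the root equation \eqref{AGH12}, and completeness of $\{e_{2k+1}\}$ in $\cH_-$ through the clean factorization $e_{2k+1}=\mu_{2k+1}^{-1/2}B\gamma_{2k+1}$ with $B=(I-T^2)^{1/2}|_{\cH_-}$, so that the hypothesis $\{\chi_n\cosh\alpha_n\}\notin\ell_2(\mathbb N)$ enters exactly as $\chi\notin\mathcal R(B)$. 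Both arguments hinge on the same range condition, but yours avoids the completion space $\cH_{-Q}$ and the appeal to Theorem \ref{new31} altogether, which makes the role of the hypothesis more transparent and the proof self-contained; the paper's version, on the other hand, is the one that generalizes (the $\cH_{-Q}$-completeness criterion is what carries over to the second-type case and to the converse direction in the proof of Theorem \ref{AGH48}). One presentational caveat: your orthonormality computation via the identities for $\frac{1}{(1-tb)^2}$ and $\frac{1}{(1-sb)(1-tb)}$ rearranges conditionally convergent series (the root equation \eqref{AGH12} is a sum of terms of mixed sign that need not converge absolutely), so you should note that the rearrangement is legitimate because each of the two series produced by the splitting converges separately; alternatively, orthonormality follows at once from $e_{2k+1}=\mu_{2k+1}^{-1/2}B\gamma_{2k+1}$ together with $B^2\gamma_{2j+1}\in\mu_{2j+1}\gamma_{2j+1}+\mathrm{span}\{\chi\}$ and $\gamma_{2k+1}\perp\chi$, which is the route implicit in Lemma \ref{AGH20}.
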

\begin{proof}
If $\{\chi_n\cosh\alpha_n\}\not\in{\ell_2(\mathbb{N})}$, then 
$\{\chi_n\cosh^2\alpha_n\}\not\in{\ell_2(\mathbb{N})}$ and, by  Lemma \ref{AGH20},   
$\{\phi_n\}$ is  a complete $J$-orthonormal sequence.
In view of \eqref{new4}, the operator $S$ defined by \eqref{new14} acts
as $S\phi_n=(-1)^nJ\phi_n$. On the other hand, taking the relation
$e^{-2\alpha_k\sigma_1}=\cosh{2\alpha_k}\sigma_0-\sinh{2\alpha_k}\sigma_1$ 
into account, we directly verify that $e^{-Q}\phi_n=(-1)^nJ\phi_n$, 
where $e^{-Q}$ is defined by \eqref{AGH52}.  Therefore, $e^{-Q}$ is a positive self-adjoint
extension of $S$.  Denote $e_n=e^{-Q/2}\phi_n$.  In view of \eqref{AGH51},
$$
e^{-Q/2}=U^{-1}\sum_{k=0}^\infty\oplus[\cosh{\alpha_k}\sigma_0-\sinh{\alpha_k}\sigma_1]U=U^{-1}\sum_{k=0}^\infty\oplus\left[\begin{array}{cc}
\cosh\alpha_k & -\sinh\alpha_k \\
-\sinh\alpha_k & \cosh\alpha_k 
\end{array}\right]U.
$$
This expression, \eqref{AGH44}, and \eqref{AGH101} allow one to calculate $\{e_n\}$ precisely, as \eqref{END1}.

 By analogy with the proof of Theorem  \ref{new31} we
obtain that $\{e_n\}$ is an orthonormal sequence in $\cH$. Moreover,
 $\{e_n\}$  is an ONB if and only if $\{\phi_n\}$ is complete in
the Hilbert space  $\cH_{-Q}$.
Below we show that the completeness of $\{\phi_n\}$ in $\cH_{-Q}$ is equivalent 
to the condition $\{\chi_n\cosh\alpha_n\}\not\in\ell_2(\mathbb{N})$.

We begin with the remark that
\begin{equation}\label{AGH126}
e^{-Q}=(I-T)(I+T)^{-1},
\end{equation}
 where $T$ is determined by \eqref{AGH67b}. 
 Indeed, since  the subspaces $\cH_k=\mbox{span}\{\mathsf{e}_{2k}, \mathsf{e}_{2k+1}\}$ are invariant with respect to $T$ and $U$ satisfies
\eqref{AGH44}, we get that $UTU^{-1}|_{\cH_k}$ acts as the multiplication by $\tanh\alpha_k\sigma_1$ in $\mathbb{C}^2$ and
$U(I-T)(I+T)^{-1}U^{-1}|_{\cH_k}$ coincides with  
$$
\left[\begin{array}{cc}
\cosh\alpha_k & -\sinh\alpha_k \\
-\sinh\alpha_k & \cosh\alpha_k 
\end{array}\right]^2=\left[\begin{array}{cc}
\cosh2\alpha_k & -\sinh2\alpha_k \\
-\sinh2\alpha_k & \cosh2\alpha_k 
\end{array}\right]=(\cosh2\alpha_k\sigma_0-\sinh2\alpha_k\sigma_1)^2=e^{-2\alpha_k\sigma_1}.
$$
 This relation and the decomposition $\cH=\sum_{k=0}^\infty\oplus\cH_k$ justify \eqref{AGH126}.

The formulas \eqref{AGH36} and \eqref{AGH126} lead to the conclusion that 
$\mathcal{D}(e^{-Q})=\mathfrak L_+{\dot+}\mathfrak L_-$, where the subspaces $\mathfrak L_\pm$
are orthogonal with respect to the inner product \eqref{new1}. Therefore, the space $\cH_{-Q}$  has the decomposition
 $$
 \cH_{-Q}=\widehat{\mathfrak L}_+{\dot+}\widehat{\mathfrak L}_-
 $$ 
where the subspaces $\widehat{\mathfrak L}_\pm$ are the completions of linear manifolds ${\mathfrak L}_\pm$
in $\cH_{-Q}$.

To prove the completeness of $\{\phi_n\}$ in  $\cH_{-Q}$ we
note that  $\{\phi_{2k}\}$ is a complete set in $\widehat{{\mathfrak L}}_+$. This  fact can be justified as follows: due to the proof of Lemma \ref{AGH20},
 $\mbox{span}\{\phi_{2k}\}$ is  dense in the subspace $\mathfrak L_+$ of $\cH$. 
In view of \eqref{AAA4}, \eqref{AGH36}, and \eqref{AGH126}, 
$$
\|f\|_{-Q}^2=(e^{-Q}f,f)=((I-T)x_+, (I+T)x_+)=[f,f]\leq\|f\|^2
$$ 
for each $f=(I+T)x_+\in\mathfrak L_+$. Therefore, each $f\in\mathfrak L_+$ can be approximated by
vectors from $\mbox{span}\{\phi_{2k}\}$ with respect to the norm $\|\cdot\|_{-Q}$. Since $\widehat{{\mathfrak L}}_+$ is the completion of 
$\mathfrak L_+$ in $\cH_{-Q}$,  the set $\{\phi_{2k}\}$ is  complete  in $\widehat{{\mathfrak L}}_+$.
 
 Similar arguments and the fact that $\mbox{span}\{\phi_{2k+1}\}$  is dense in  the subspace $\mathfrak L_-^0$ 
 lead to the conclusion that each vector $f\in\mathfrak L_-^0$ can be approximated by vectors of $\mbox{span}\{\phi_{2k+1}\}$ 
 with respect to  $\|\cdot\|_{-Q}$.
Therefore, in order to  proof the completeness of $\{\phi_{2k+1}\}$ in $\widehat{{\mathfrak L}}_-$
it suffices to find when $\mathfrak L_-^0$ turns out to be dense in $\widehat{{\mathfrak L}}_-$ with respect to $\|\cdot\|_{-Q}$.

Let $h\in\widehat{{\mathfrak L}}_-$ be orthogonal to $\mathfrak L_-^0$ in $\cH_{-Q}$. Since $\mathfrak L_-$  is dense in $\widehat{{\mathfrak L}}_-$
we can approximate $h$ by a sequence $\{f_n\}$, where $f_n\in\mathfrak L_-$.  
In view of \eqref{new1}, the sequence $\{e^{-Q/2}f_n\}$ is
fundamental in $\cH$ and, hence, $\lim_{n\to\infty}e^{-Q/2}f_n=f\in\cH$.  Due to \eqref{AGH36},  $f_n=(I+T)x_{-}^n$, where $x_-^n\in\cH_-$. 
Moreover,  $e^{-Q/2}=[(I-T)(I+T)^{-1}]^{1/2}$  in view of \eqref{AGH126}. 
This means that 
$$
e^{-Q/2}f_n=[(I-T)(I+T)^{-1}]^{1/2}(I+T)x_{-}^n=(I-T^2)^{1/2}x_-^n
$$ and, since $(I-T^2)^{1/2}$ leaves $\cH_\pm$ invariant, 
$f=\lim_{n\to\infty}e^{-Q/2}f_n=\lim_{n\to\infty}(I-T^2)^{1/2}x_-^n=f\in\cH_-$. 
 On the other hand, for each vector $(I+T)g\in\mathfrak L_-^0$, the relation
$e^{-Q/2}(I+T)g=(I-T^2)^{1/2}g$ holds.  After such kind of auxiliary work we obtain:
\begin{eqnarray*}
0=(h, (I+T)g)_{-Q}=\lim_{n\to\infty}(f_n, (I+T)g)_{-Q}=\lim_{n\to\infty}(e^{-Q/2}f_n, e^{-Q/2}(I+T)g) & = & \\
(f, (I-T^2)^{1/2}g)=((I-T^2)^{1/2}f, g) \quad \mbox{for all} \quad g\in M_-.
\end{eqnarray*}
Therefore, without loss of generality we can assume that $(I-T^2)^{1/2}f=\chi$. 
Reasoning by analogy with
the final part of the proof of Lemma \ref{AGH19},  we obtain that   
 $\mathfrak L_-^0$ is dense in $\widehat{{\mathfrak L}}_-$ if and only if 
$\{\chi_n\cosh\alpha_n\}\not\in\ell_2(\mathbb{N})$.  This relation guarantees 
the completeness of  $\{\phi_n\}$ in $\cH_{-Q}$.
\end{proof}

\subsection{The proof of Theorem \ref{AGH48}}

The implication $`\{\chi_n\cosh\alpha_n\}\not\in{\ell_2(\mathbb{N})}  \to  \{\phi_n\}$ is a first type sequence' 
was proved in Lemma \ref{NEWWW}.  Let us assume  that $\{\phi_n\}$ is  first type, i.e., there exists
a self-adjoint operator $Q'$ anti-commuting with $J$ and such that  $\phi_n=e^{Q'/2}e_n'$, where $\{e_n'\}$
 is an ONB of $\cH$.  

Denote $\cC=Je^{-Q'}$. Since $Q'$ anti-commutes with $J$, the operator $\cC$ satisfies the relation 
$\cC^2f=f$ for $f\in\mathcal{D}(\cC)$ and $J\cC=e^{-Q'}$ is a positive self-adjoint operator in $\cH$.
In view of \cite[Theorem 6.2.3]{BGSZ},  there exists $J$-orthonormal maximal positive $\mathfrak{L}_+'$ and maximal negative $\mathfrak{L}_-'$ subspaces  of the Krein space 
$(\cH, [\cdot,\cdot])$ which uniquely characterize $\cC$ in the following way: $\cC{f_+}=f_+$ and $\cC{f_-}=f_-$  for $f_\pm\in\mathfrak{L}_\pm'$.

Since $e^{-Q'}$ is an  extension of $S$, we obtain $\cC\phi_n=Je^{-Q'}\phi_n=JS\phi_n=(-1)^{n}\phi_n$.
Therefore, the operator $\cC$ acts as the identity operator on elements of the subspace $\mathfrak{L}_+$ defined by \eqref{AGH36}
 (since $\mbox{span}\{\phi_{2k}\}_{k=0}^{\infty}$ is dense in $\mathfrak{L}_+$). This yields
that $\mathfrak{L}_+'=\mathfrak{L}_+$ and, moreover $\mathfrak{L}_-'=\mathfrak{L}_-$ since  the maximal negative
subspace $\mathfrak{L}_-'$ is determined uniquelly as $J$-orthogonal complement of  $\mathfrak{L}_+'=\mathfrak{L}_+$.
 We obtain that the $J$-orthogonal sum $\mathfrak{L}_+\dot{+}\mathfrak{L}_-$ determines two operators $Je^{-Q'}$ and
$Je^{-Q}$. Applying again \cite[Theorem 6.2.3]{BGSZ}, we conclude that $Q'=Q$, where $Q$ is determined by \eqref{AGH52}.
In this case, $e_n'=e^{-Q'/2}\phi_n=e^{-Q/2}\phi_n=e_n$,  where $\{e_n\}$ is determined by \eqref{END1}.
Therefore, $\{e_n\}$ is an ONB of $\cH$ that, in view of the proof of Lemma  \ref{NEWWW} is equivalent to the condition 
$\{\chi_n\cosh\alpha_n\}\not\in{\ell_2(\mathbb{N})}$.   The inverse implication  `first type sequence $\{\phi_n\} \to \{\chi_n\cosh\alpha_n\}\not\in{\ell_2(\mathbb{N})}$ ' is proved. 

If $\{\phi_n\}$ is second type, the choice of $Q$  as in \eqref{AGH52} leads to the non-complete  orthonormal sequence
\eqref{END1} in $\cH$.  Trying to keep the GRS's formula $\phi_n=e^{Q/2}e_n$ we have to use
$Q=-\ln A$, where $A$ is a positive extremal extension of $S$ (without loss of generality, we may assume that $A=A_F$). 
In this case, the ONB $\{e_n\}$ will be different from \eqref{END1}. 
 
\section*{Acknowledgements}
This work was partially supported by the Faculty of Applied Mathematics AGH UST statutory tasks within subsidy of 
Ministry of Science and Higher Education.

\end{document}